\newtheorem{theorem}{Theorem}
\newtheorem{proposition}{Proposition}
\newtheorem{lemma}{Lemma}
\newtheorem{cor}[theorem]{Corollary}
\newtheorem{procedure}[theorem]{Procedure}
\theoremstyle{definition}
\newtheorem{definition}{Definition}
\newtheorem{algorithm}[theorem]{Algorithm}
\newtheorem*{acknowledgements*}{Acknowledgements}
\numberwithin{equation}{section}
\DeclareMathOperator{\SSYRT}{SSYRT}
\DeclareMathOperator{\SSRRT}{SSRRT}
\DeclareMathOperator{\SSYCT}{SSYCT}
\DeclareMathOperator{\SYRT}{SYRT}
\DeclareMathOperator{\SSYT}{SSYT}
\DeclareMathOperator{\col}{col}
\newcommand{\YL}{\mathcal{L}}
\DeclareMathOperator{\shape}{\bm \lambda}
\newcommand{\RSYQS}{row-strict Young quasisymmetric Schur function}
\DeclareMathOperator{\comp}{comp}
\newlength\cellsize \setlength\cellsize{15\unitlength}
\newcommand\cellify[1]{\def\thearg{#1}\def\nothing{}%
\ifx\thearg\nothing
\vrule width0pt height\cellsize depth0pt\else
\hbox to 0pt{\usebox2\hss}\fi%
\vbox to 15\unitlength{
\vss
\hbox to 15\unitlength{\hss$#1$\hss}
\vss}}
\newcommand\tableau[1]{\vtop{\let\\=\cr
\setlength\baselineskip{-16000pt}
\setlength\lineskiplimit{16000pt}
\setlength\lineskip{0pt}
\halign{&\cellify{##}\cr#1\crcr}}}
\newcommand\expath[1]{%
\hbox to 0pt{\usebox3\hss}%
\vbox to 15\unitlength{
\vss
\hbox to 15\unitlength{\hss$#1$\hss}
\vss}}
\title{Skew row-strict quasisymmetric Schur functions } 
\author{Sarah K. Mason     \and
        Elizabeth Niese 
}
\begin{document}


\maketitle

\begin{abstract}
Mason and Remmel introduced a basis for quasisymmetric functions known as the row-strict quasisymmetric Schur functions.  This basis is generated combinatorially by fillings of composition diagrams that are analogous to the row-strict tableaux that generate Schur functions.  We introduce a modification known as Young row-strict quasisymmetric Schur functions, which are generated by row-strict Young composition fillings.  After discussing basic combinatorial properties of these functions, we define a skew Young row-strict quasisymmetric Schur function using the Hopf algebra of quasisymmetric functions and then prove this is equivalent to a combinatorial description.  We also provide a decomposition of the skew Young row-strict quasisymmetric Schur functions into a sum of Gessel's fundamental quasisymmetric functions and prove a multiplication rule for  the product of a Young row-strict quasisymmetric Schur function and a Schur function.
\keywords{quasisymmetric functions \and Schur functions \and composition tableaux \and Littlewood-Richardson rule}
\end{abstract}

\section{Introduction}\label{sec:intro}

Schur functions are symmetric polynomials introduced by Schur~\cite{Sch01} as characters of irreducible representations of the general linear group of invertible matrices.  The Schur functions can be generated combinatorially using semi-standard Young tableaux and form a basis for the ring, $Sym$, of symmetric functions.  The product of two Schur functions has positive coefficients when expressed in terms of the Schur function basis, where the coefficients are given by a combinatorial rule called the Littlewood-Richardson Rule~\cite{Ful97}.  These Littlewood-Richardson coefficients appear in algebraic geometry in the cohomology of the Grassmannian.  They also arise as the coefficients when skew Schur functions are expressed in terms of ordinary Schur functions.

Skew Schur functions generalize Schur functions and are themselves of interest in discrete geometry, representation theory, and mathematical physics as well as combinatorics.  They can be generated by fillings of skew diagrams or by a generalized Jacobi-Trudi determinant.  The inner products of certain skew Schur functions are enumerated by collections of permutations with certain descent sets.  In representation theory, character formulas for certain highest weight modules of the general linear Lie algebra are given in terms of skew Schur functions~\cite{Ugl00}.

The ring $Sym$ generalizes to a ring of nonsymmetric functions $QSym$, which is a Hopf algebra dual to the noncommutative symmetric functions $NSym$.  The ring of quasisymmetric functions was introduced by Gessel~\cite{Ges84} as a source for generating functions for $P$-partitions and has since been shown to be the terminal object in the category of combinatorial Hopf algebras~\cite{ABS06}.  This ring is also the dual of the Solomon descent algebra~\cite{MalReu95} and plays an important role in permutation enumeration~\cite{GesReu93} and reduced decompositions in finite Coxeter groups~\cite{Sta84}.  Quasisymmetric functions also arise as characters of certain degenerate quantum groups~\cite{Hiv00} and form a natural setting for many enumeration problems~\cite{Sta99}. 

A new basis for quasisymmetric functions called the ``quasisymmetric Schur functions" arose through the combinatorial theory of Macdonald polynomials by summing all Demazure atoms (non-symmetric Macdonald polynomials specialized at $q=t=0$) whose indexing weak composition reduces to the same composition when its zeros are removed~\cite{HLMvW09}.  Elements of this basis are generating functions for certain types of fillings of composition diagrams with positive integers such that the row entries of these fillings are weakly decreasing.  Reversing the entries in these fillings creates fillings of composition diagrams with positive integers that map more naturally to semi-standard Young tableaux, allowing for a more direct connection to classical results for Schur functions.  These fillings are used to generate the Young quasisymmetric Schur functions introduced in~\cite{LMvW13}.

Row-strict quasisymmetric Schur functions were introduced by Mason and Remmel~\cite{MasRem10} in order to extend the duality between column- and row-strict tableaux to composition tableaux.  When given a column-strict tableau, a row-strict tableau can be obtained by taking the conjugate (reflecting over the main diagonal).  Since quasisymmetric Schur functions are indexed by compositions, conjugation is not as straightforward.  Reflecting over the main diagonal may not result in a composition diagram.  Mason and Remmel introduced a conjugation-like operation on composition diagrams that takes a column-strict composition tableau with underlying partition diagram $\lambda$ to a row-strict composition tableau with underlying partition diagram $\lambda'$, the conjugate of $\lambda$.  




In this paper, we introduce Young row-strict quasisymmetric Schur functions, which are conjugate to the Young quasisymmetric Schur functions under the omega operator in the same way that the row-strict quasisymmetric Schur functions are conjugate to the quasisymmetric Schur functions.  This new basis can be expanded positively in terms of the fundamental quasisymmetric functions, admits a Schensted type of insertion, and has a multiplication rule which refines the Littlewood-Richardson rule.  These properties are similar to properties of the row-strict quasisymmetric Schur functions explored by Mason and Remmel~\cite{MasRem10}, but we introduce a skew version of the Young row-strict quasisymmetric Schur functions, which is not known for row-strict quasisymmetric Schur functions.  The objects used to generate the Young row-strict quasisymmetric Schur functions are in bijection with transposes of semi-standard Young tableaux, and therefore these new functions fit into the bigger picture of bases for quasisymmetric functions in a natural way.  In fact, the objects introduced in this paper are the missing piece when rounding out the quasisymmetric story that is analogous to the Schur functions in the ring of symmetric functions.  Throughout the paper we show the connections between the Young row-strict quasisymmetric Schur functions and the row-strict quasisymmetric Schur functions in~\cite{MasRem10}. 


In Section~\ref{sec:back} we review the background on Schur functions and quasisymmetric Schur functions and introduce Young row-strict composition diagrams.  In Section~\ref{sec:rsqf} we define the Young row-strict quasisymmetric Schur functions and prove several fundamental results about the functions.  In Section~\ref{sec:skewrs} we define skew Young row-strict quasisymmetric Schur functions via Hopf algebras and then provide a combinatorial definition of the functions.  Finally, in Section~\ref{sec:LRrule}, we prove an analogue to the Littlewood-Richardson rule and an analogue to conjugation for Young composition tableaux.

\section{Background}\label{sec:back}

\subsection{Partitions and Schur functions}

A {\it partition} $\mu$ of a positive integer $n$, written $\mu \vdash n$,  is a finite, weakly decreasing sequence of positive integers, $\mu=(\mu_1,\mu_2,\ldots, \mu_k)$, such that $\sum \mu_i = n$.  Each $\mu_i$ is a {\it part} of $\mu$ and $n$ is the {\it weight} of the partition.  The {\it length} of the partition is $k$, the number of parts in the partition.  Given a partition $\mu = (\mu_1,\mu_2, \ldots, \mu_k)$, the {\it diagram} of $\mu$ is the collection of left-justified boxes (called {\it cells}) such that there are $\mu_i$ boxes in the $i$th row from the bottom.  This is known as the {\it French convention} for the diagram of a partition.  (The English convention places $\mu_i$ left-justified boxes in the $i$th row from the {\it top}.)  Here, the cell label $(i,j)$ refers to the cell in the $i^{th}$ row from the bottom and the $j^{th}$ column from the left.  Given partitions $\lambda$ and $\mu$ of $n$, we say that $\lambda \geq \mu$ in {\it dominance order} if $\lambda_1+\lambda_2+\cdots+\lambda_i \geq \mu_1+\mu_2+\cdots+\mu_i$ for all $i\geq 1$ with $\lambda>\mu$ if $\lambda\geq \mu$ and $\lambda\neq \mu$.

A {\it semi-standard Young tableau} (SSYT) of shape $\mu$ is a placement of positive integers into the diagram of $\mu$ such that the entries are strictly increasing up columns from bottom to top and weakly increasing across rows from left to right.  Let $\SSYT(\mu,k)$ denote the set of SSYT of shape $\mu$ filled with labels from the set $[k] = \{1,2, \ldots, k\}$. The {\it weight} of a SSYT $T$ is $x^T=\prod_i x_i^{v_i}$ where $v_i$ is the number of times $i$ appears in $T$.  We can now define a {\it Schur function} as a generating function of semi-standard Young tableaux of a fixed shape.  
\begin{definition}\label{def:schur}
Let $\mu$ be a partition of $n$.  Then, 
\[s_\mu(x_1,x_2,\ldots, x_k) = \sum_{T \in \SSYT(\mu,k)}x^T.\]
\end{definition}

It is known that $\{s_\mu:\mu \vdash n\}$ is a basis for the space $Sym_n$ of symmetric functions of degree $n$.  We can also define a {\it row-strict semi-standard Young tableau} of shape $\mu$ as a placement of positive integers into the diagram of $\mu$ such that the entries are weakly increasing up columns and strictly increasing across rows from left to right.  Note that if $T$ is a row-strict semi-standard Young tableau, the conjugate of $T$, denoted $T'$, obtained by reflecting $T$ over the line $y=x$, is a SSYT, as shown in Fig. \ref{fig:RSSYT}.  Reading order for a row-strict semi-standard tableau is up each column, starting from right to left. 

\begin{figure}
\[ T = \tableau{4\\2&3&5\\1&2&5\\1&2&4&5&6} \qquad\qquad T' = \tableau{6\\5\\4&5&5\\2&2&3\\1&1&2&4}\]
\caption{A row-strict semi-standard Young tableau $T$ of shape $(5,3,3,1)$ and its conjugate $T'$.}\label{fig:RSSYT}
\end{figure}

A {\em standard Young tableau} (SYT) of shape $\mu$ a partition of $n$ is a placement of $1,2,\ldots,n$, each exactly once, into the diagram of $\mu$ such that each column increases from bottom to top and each row increases from left to right.  We can find the {\em standardization} $std(T)$ of a row-strict semi-standard tableau $T$ by replacing the $k_1$ ones with $1,2,\ldots, k_1$, in reading order, then replacing the $k_2$ twos with $k_1+1,\ldots, k_1+k_2$, and so on.  The results of standardization are seen in Fig.~\ref{fig:stdz}. 

\begin{figure} 
\[ T = \tableau{2\\2&4&5\\1&2&4&5\\1&2&3&5} \qquad\qquad std(T) = \tableau{6\\5&9&12\\2&4&8&11\\1&3&7&10}\]
\caption{Standardization of a row-strict Young tableau $T$.}\label{fig:stdz}
\end{figure}  

\subsection{Compositions and quasisymmetric Schur functions}

A {\it weak composition} of $n$ is a finite sequence of nonnegative integers that sum to $n$.  The individual integers appearing in such a sequence are called its {\it parts}, and $n$ is referred to as the {\it weight} of the weak composition.  A {\it strong composition} (often just called a {\it composition}) $\alpha$ of $n$, written $\alpha \vDash n$, is a finite sequence of positive integers that sum to $n$.  If $\alpha=(\alpha_1, \alpha_2, \hdots , \alpha_k)$, then $\alpha_i$ is a {\it part} of $\alpha$ and $| \alpha| = \sum_{i=1}^k \alpha_i$ is the {\it weight} of $\alpha$.  The {\it length} of a composition (or weak composition), denoted $\ell(\alpha)$, is the number of parts in the composition.  If the parts of a composition $\alpha$ can be rearranged into a partition $\lambda$, we say that the underlying partition of $\alpha$ is $\lambda$ and write $\shape(\alpha) = \lambda$.  A composition $\beta$ is said to be a {\it refinement} of a composition $\alpha$ if $\alpha$ can be obtained from $\beta$ by summing collections of consecutive parts of $\beta$.

Given a composition $\alpha=(\alpha_1, \hdots , \alpha_k)$, its {\it diagram} is given by placing boxes (or {\it cells}) into left-justified rows so that the $i^{th}$ row from the bottom contains $\alpha_i$ cells.  Note that this is analogous to the French notation for the Young diagram of a partition.  

There is a bijection between subsets of $[n-1]$ and compositions of $n$.  Suppose $S=\{s_1,s_2,\ldots,s_k\} \subseteq [n-1]$.  Then the corresponding composition is $\comp(S) = (s_1,s_2-s_1, \ldots, s_k-s_{k-1},n-s_k)$.  Similarly, given a composition $\alpha = (\alpha_1, \alpha_2, \ldots, \alpha_k)$, the corresponding subset $S = \{s_1, s_2, \ldots, s_{k-1}\}$ of $[n-1]$ is obtained by setting $s_i = \sum_{j=1}^i \alpha_j$.  We also need the notion of {\it complementary compositions}.  The {\it complement $\tilde{\beta}$} to a composition $\beta=\comp(S)$ arising from a subset $S \subseteq [n-1]$ is the composition obtained from the subset $S^c = [n-1]-S$.  For example, the composition $\beta = (1,4,2)$ arising from the subset $S=\{1,5 \} \subseteq [6]$ has complement $\tilde{\beta}=(2,1,1,2,1)$ arising from the subset $S^c=\{2,3,4,6\}$.

A {\it quasisymmetric function} is a bounded degree formal power series $f(x) \in \mathbb{Q}[[x_1, x_2, \hdots ]]$ such that for all compositions $\alpha=(\alpha_1, \alpha_2, \hdots, \alpha_k)$, the coefficient of $\prod x_i^{\alpha_i}$ is equal to the coefficient of $\prod x_{i_j}^{\alpha_i}$ for all $i_1 < i_2 < \hdots < i_k$.  Let $QSym$ denote the ring of quasisymmetric functions 
and $QSym_n$ denote the space of homogeneous quasisymmetric functions 
of degree $n$ so that $\displaystyle{QSym = \oplus_{n \geq 0} QSym_n}$. 

A natural basis for $QSym_n$  is the {\it monomial quasisymmetric basis}, given by the set of all $M_\alpha$ such that  $\alpha \vDash n$ where  
$$M_{\alpha} = \sum_{i_1 < i_2 < \cdots < i_k} x_{i_1}^{\alpha_1} x_{i_2}^{\alpha_2} \cdots x_{i_k}^{\alpha_k}.$$  
Gessel's {\it fundamental basis for quasisymmetric functions}~\cite{Ges84} can be expressed by $$F_{\alpha} = \sum_{\beta \preceq \alpha} M_{\beta},$$ where $\beta \preceq \alpha$ means that $\beta$ is a refinement of $\alpha$.

\subsection{Partial orderings of compositions}

The Young composition poset $\YL_c$~\cite{LMvW13} is the poset consisting of all (strong) compositions with $\alpha = (\alpha_1,\alpha_2,\ldots,\alpha_k)$ covered by 
\begin{enumerate}
\item  $(\alpha_1,\alpha_2,\ldots, \alpha_k, 1)$, that is, the composition formed by appending a part of size 1 to $\alpha$, and
\item $\beta = (\alpha_1,\ldots,\alpha_j+1, \ldots,\alpha_k)$ with $\alpha_i\neq \alpha_j$ for all $i>j$. That is, $\beta$ is the composition formed from $\alpha$ by adding one to the rightmost part of any given size.
\end{enumerate}

The composition $\alpha$ is said to be {\it contained} in the composition $\beta$ (written $\alpha \subset \beta$) if and only if $\ell(\alpha) \le \ell(\beta)$ and $\alpha_i \le \beta_i$ for all $1 \le i \le \ell(\alpha)$.  Note that if $\alpha<_{\YL_c}\beta$, then $\alpha \subset \beta$, though the converse is not true.  Given $\alpha \subset \beta$, we define a {\it skew composition diagram} of shape $\beta//\alpha$ to be the diagram of $\beta$ with the cells of $\alpha$ removed from the bottom left corner, as seen in Fig. \ref{fig:skew}.  
\begin{figure}
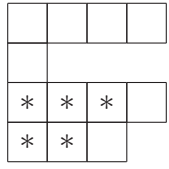

\[\tableau{{}&{}&{}&{}\\{}\\*&*&*&{}\\*&*&{}}\]
\caption{A skew composition diagram of shape $(3,4,1,4)//(2,3)$. }\label{fig:skew}
\end{figure}

\subsection{Composition tableaux}\label{sec:comptab}

Row-strict quasisymmetric Schur functions were introduced by Mason and Remmel \cite{MasRem10} as weighted sums of what we will call {\em semi-standard  reverse row-strict composition tableaux} (SSRRT).  Given a composition $\alpha=(\alpha_1, \alpha_2,\ldots, \alpha_k)$ with diagram written in the English convention (that is, $\alpha_1$ is the length of the top row, $\alpha_2$ the length of the next row down, etc.), a  SSRRT $T$ is a filling of the cells of $\alpha$ with positive integers such that 
\begin{enumerate}
\item row entries are strictly decreasing from left to right,
\item the entries in the leftmost column are weakly increasing from top to bottom, and 
\item (triple condition) for $1\leq i<j\leq \ell(\alpha)$ and $1\leq k < m$, where $m$ is the size of the largest part of $\alpha$, if $T(j,k+1)<T(i,k )$, then $T(j,k+1)\leq T(i,k+1)$, assuming that the entry in any cell not contained in $\alpha$ is 0, where $T(a,b)$ is the entry in row $a$, column $b$.  
\end{enumerate}
For our purposes, we instead introduce {\em Young row-strict composition tableaux}, which are fillings of $\alpha//\beta$ where the diagram of $\alpha//\beta$ is written in the French convention.  See Appendix~\ref{appendix} for a table of the various types of fillings of composition diagrams appearing throughout this paper.

\begin{definition}\label{SSYRTdef}
A filling $T: \alpha // \beta \rightarrow \mathbb{Z}_+$ is a {\it semi-standard Young row-strict composition tableau (SSYRT)} of shape $\alpha // \beta$ if it satisfies the following conditions:

\begin{enumerate}
\item row entries are strictly increasing from left to right,
\item the entries in the leftmost column are weakly decreasing from top to bottom, and 
\item (triple condition) for $1\leq i<j \leq \ell(\alpha)$ and $1\leq k<m$, where $m$ is the size of the largest part of $\alpha$, if $T(j,k)<T(i,k+1)$, then $T(j,k+1)\leq T(i,k+1)$, assuming the entry in any cell not contained in $\alpha$ is $\infty$ and the entry in any cell contained in $\beta$ is 0.  The arrangement of these cells can be seen in Fig.~\ref{fig:triplecond}.
\end{enumerate}
\end{definition}

\begin{figure}
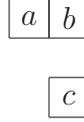

\[ \tableau{a&b\\\\&c}\]
\caption{A triple of cells from Def.~\ref{SSYRTdef} where $T(j,k)=a$, $T(j,k+1)=b$, and $T(i,k+1)=c$.  This triple satisfies the triple condition if $a<c$ implies $b\leq c$.}\label{fig:triplecond}
\end{figure}

\begin{figure}
$$T=\tableau{1 & 2 & 3 & 4 \\ 1 \\ * & * & * & 1 \\ * & * & 4}$$
\caption{An SSYRT of shape $(3,4,1,4) // (2,3)$ and weight $x^T=x_1^3 x_2 x_3 x_4^2$.}\label{fig:SSYRT}
\end{figure}

Note that the triple condition guarantees that if $\beta \subset \alpha$ but $\beta \nless_{\YL_c} \alpha$, then there are no SSYRT of shape $\alpha//\beta$.  The {\em weight} of a SSYRT $T$ of shape $\alpha// \beta$ is the monomial $x^T=\prod_i x_i^{v_i}$ where $v_i$ is the number of times the label $i$ appears in $T$ as seen in Fig.~\ref{fig:SSYRT}.

There is a diagram-reversing, weight reversing bijection, $f$, between \\$\SSRRT(\alpha, m)$ and $\SSYRT(rev(\alpha), m)$ where $rev(\alpha)=(\alpha_k,\alpha_{k-1}, \ldots, \alpha_1)$. \\ Note that since the diagrams for  SSRRTs follow the English convention and the diagrams for SSYRTs follow the French convention, the shapes resulting from this bijection will appear to be the same, but correspond to two distinct compositions.  To obtain the entries in the new filling, replace each entry $i$ with $m-i+1$ as shown in Fig.~\ref{fig:f}.  It is routine to show that the triple conditions are met by this bijection.

\begin{figure}
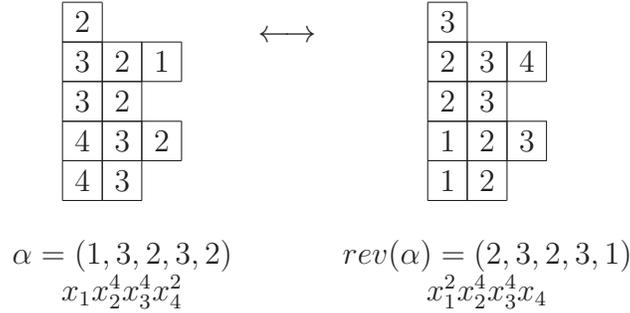

\[ \begin{array}{ccc}\tableau{2\\3&2&1\\3&2\\4&3&2\\4&3} &\longleftrightarrow & \tableau{  3\\2&3&4\\2&3\\1&2&3\\1&2}\\\\\alpha=(1,3,2,3,2) && rev(\alpha)=(2,3,2,3,1)\\x_1x_2^4x_3^4x_4^2&&x_1^2x_2^4x_3^4x_4\end{array}
\]   
\caption{The diagram-reversing, weight-reversing bijection $f$.}\label{fig:f}
\end{figure}

A {\em standard Young row-strict composition tableau} (SYRT) of shape $\alpha// \beta$ is a SSYRT with each of the entries $1,2,\ldots, |\alpha//\beta|$ appearing exactly once.  There is a bijection between SYRT and saturated chains in $\YL_c$.  Given a saturated chain $\alpha^0<_{\YL_c}\alpha^1<_{\YL_c}\cdots <_{\YL_c} \alpha^k$ in $\YL_c$, construct an SYRT by placing the label $i$ in the cell of $\alpha^k//\alpha^0$ such that the cell is in $\alpha^i//\alpha^{i-1}$.  For example, the chain $(1)<_{\YL_c}(1,1)<_{\YL_c}(1,2)<_{\YL_c}(1,2,1)<_{\YL_c}(1,2,2)<_{\YL_c} (1,2,3)<_{\YL_c}(2,2,3)$ gives rise to the SYRT in Fig.~\ref{fig:SYRTbij}.
\begin{figure}
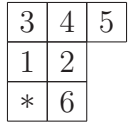

\[\tableau{ 3&4&5\\1&2\\*&6}\]
\caption{An SYRT of shape $(2,2,3)//(1)$.}\label{fig:SYRTbij}
\end{figure}

\section{Young row-strict quasisymmetric Schur functions}\label{sec:rsqf}

The Young quasisymmetric Schur functions introduced by Luoto, Mykytiuk, and van Willigenburg~\cite{LMvW13} are analogous to the quasisymmetric Schur functions~\cite{HLMvW09} since they retain many of the same properties and are described combinatorially through fillings of composition diagrams under slightly modified rules.  (Quasisymmetric Schur functions are generated by the weights of reverse composition tableaux while Young quasisymmetric Schur functions are generated by the weights of Young composition tableaux.)  This same analogy relates the polynomials introduced below to {\it row-strict quasisymmetric Schur functions} introduced in~\cite{MasRem10}.

\begin{definition}
Let $\alpha$ be a composition.  Then the Young row-strict quasisymmetric Schur function $\mathcal{R}_{\alpha}$ is given by $$\mathcal{R}_{\alpha}=\sum_T x^T$$ where the sum is over all Young row-strict composition tableaux (SSYRTs) $T$ of shape $\alpha$.  See Figure~\ref{SSYRTweights} for an an example.
\end{definition}

\begin{figure}
$$\tableau{2&3 \\ 2 \\ 1&2 \\ 1} \qquad \tableau{2&4 \\ 2 \\ 1&2 \\ 1} \qquad \tableau{3&4 \\ 2 \\ 1&2 \\ 1} \qquad \tableau{3&4 \\ 3 \\ 1&2 \\ 1} \qquad \tableau{3&4 \\ 3 \\ 1&3 \\ 1} \qquad \tableau{3&4 \\ 3 \\ 2&3 \\ 1} \qquad \tableau{3&4 \\ 3 \\ 2&3 \\ 2}$$
\begin{align*}\mathcal{R}&_{1212}(x_1,x_2,x_3,x_4)=\\&x_1^2x_2^3x_3 + x_1^2x_2^3x_4 + x_1^2x_2^2x_3x_4 + x_1^2x_2x_3^2x_4 + x_1^2x_3^3x_4 + x_1x_2x_3^3x_4 + x_2^2x_3^3x_4\end{align*}
\caption{The SSYRT that generate $\mathcal{R}_{(1,2,1,2)}(x_1,x_2,x_3,x_4)$.}
\label{SSYRTweights}
\end{figure}

In order to describe several important properties enjoyed by the \RSYQS s, we introduce a weight-preserving bijection between semi-standard Young tableaux and SSYRT.  The following map sends a row-strict semi-standard Young tableau $T$ to an SSYRT $\rho(T) = F$, described algorithmically, with an example shown in Fig.~\ref{fig:bijection}.

\begin{enumerate}
\item Place the entries from the leftmost column of $T$ into the first column of $F$ in weakly decreasing order from top to bottom.
\item Once the first $k-1$ columns of $T$ have been placed into $F$, place the entries from the $k^{th}$ column of $T$ into $F$, from smallest to largest.  Place an entry $e$ in the highest row $i$ such that $(i,k)$ does not already contain an entry from $T$ and the entry $(i, k-1)$ is strictly smaller than $e$.
\item Repeat until all entries in the $k^{th}$ column of $T$ have been placed into $F$.
\end{enumerate}

Note that during the second step of the procedure $\rho$, there is always an available cell where the entry $e$ can be placed.  This is due to the fact that in the row-strict semi-standard Young tableau the number of entries strictly smaller than $e$ in the column immediately to the left of that containing $e$ is greater than or equal to the number of entries in the column containing $e$ which are less than or equal to $e$.  Thus, $\rho$ is a well-defined procedure.

\begin{figure}
$$T= \tableau{6 \\ 2 & 4 & 5 & 7 \\ 1 & 3 & 5 & 6 \\  1 & 2 & 3 & 4 & 6 } \; \; \qquad \qquad \rho(T) = \tableau{6 \\ 2 & 3 & 5 & 6 \\  1 & 2 & 3 & 4 & 6 \\ 1 & 4 & 5  & 7}$$
\caption{The bijection $\rho$ applied to a row-strict semi-standard Young tableau $T$.}
\label{fig:bijection}
\end{figure}

\begin{lemma}{\label{lemma:bijection}}
The map $\rho$ is a weight-preserving bijection between the set of row-strict semi-standard Young tableaux of shape $\lambda$ and the set of SSYRT's of shape $\alpha$ where $\shape(\alpha) = \lambda$.
\end{lemma}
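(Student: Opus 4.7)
The plan is to verify that $\rho$ produces an SSYRT of the correct shape and then exhibit an explicit two-sided inverse. For the shape, observe that step (2) of $\rho$ only places an entry in column $k$ of a row whose column $k-1$ cell is already occupied, so each row of $\rho(T)$ is left-justified, and hence $\rho(T)$ has composition shape. The $k$th column of $\rho(T)$ is a rearrangement of the $k$th column of $T$, which makes column lengths agree and gives $\shape(\alpha)=\lambda$; weight preservation is then immediate.

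To see $\rho(T)$ is an SSYRT, note that strict row-increase is forced by the placement rule, and the leftmost-column condition holds because column 1 of $T$ (weakly increasing bottom to top) is placed unchanged into column 1 of $\rho(T)$. The triple condition is the main obstacle. Fix $i<j$ and a column index $k$ with $\rho(T)(j,k)<\rho(T)(i,k+1)$ and set $e=\rho(T)(i,k+1)$. Consider the moment $\rho$ places $e$ into row $i$ of column $k+1$. If either $\rho(T)(j,k+1)>e$ or the cell $(j,k+1)$ is never occupied, then at that moment $(j,k+1)$ is empty, and because $\rho(T)(j,k)<e$, row $j$ is a legal candidate; since $j>i$, row $j$ sits strictly above row $i$, contradicting the rule that $e$ goes into the highest available row. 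Hence $\rho(T)(j,k+1)\leq e$, which is exactly the triple condition.

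The inverse sends each $F\in\SSYRT(\alpha)$ to the tableau $\rho^{-1}(F)$ of shape $\shape(\alpha)=\lambda$ obtained by sorting each column of $F$ into weakly increasing order from bottom to top. That $\rho^{-1}(F)$ is row-strict follows from a direct counting argument using only the row-strict condition on $F$: if $\rho^{-1}(F)(r,k+1)=e$ then column $k+1$ of $F$ contains at least $r$ entries of value at most $e$, and each row housing such an entry also contributes a column-$k$ entry strictly less than $e$, so column $k$ of $F$ has at least $r$ entries strictly less than $e$, whence $\rho^{-1}(F)(r,k)<e$. The identity $\rho^{-1}\circ\rho=\mathrm{id}$ is immediate because the columns of $\rho(T)$ share multisets with the columns of $T$. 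For $\rho\circ\rho^{-1}=\mathrm{id}$ I would induct on the column index: assuming columns $1,\dots,k-1$ already match, place the column-$k$ entries of $\rho^{-1}(F)$ one at a time and use the triple condition on $F$ to rule out any candidate row strictly above the row where each entry actually sits in $F$, by the same highest-row argument used in paragraph two. The bookkeeping around equal column entries, which the procedure fills in from the highest available row downward, is the step I expect will require the most care.
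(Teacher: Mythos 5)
Your proposal is correct and follows essentially the same route as the paper: the same construction, the same ``highest available row'' contradiction to verify the triple condition, and the same column-sorting inverse. The only organizational difference is at the end, where you prove $\rho\circ\rho^{-1}=\mathrm{id}$ directly by induction on columns (and your equal-entry bookkeeping does go through, since the triple condition forces every candidate row at the moment the copies of $e$ are inserted to be exactly a row containing $e$ in $F$), whereas the paper instead shows $\rho^{-1}$ is injective by locating the highest cell in the leftmost differing column of two SSYRTs and deriving a triple violation -- the same mechanism in contrapositive form.
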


\begin{proof}
Let $T$ be a row-strict semi-standard Young tableau of shape $\lambda$.  Since $\rho$ preserves the column entries of $T$, $\rho$ is a weight-preserving map.  Since the number of entries in each column is preserved, the row lengths are preserved but possibly in a different order.  Therefore the shape $\alpha$ of $\rho(T)$ is a rearrangement of the partition $\lambda$ which describes the shape of $T$.  We next prove that $\rho(T)$ is indeed an SSYRT.

To see that $\rho(T)$ is an SSYRT, we must check the three conditions given in Definition~\ref{SSYRTdef}.  Conditions (1) and (2) are satisfied by construction.  To check condition (3), consider $i<j$ and the cell $(i,k+1)$ such that $\rho(T)(j,k) < \rho(T)(i,k+1)$.  We need to prove that $\rho(T)(j,k+1) \le \rho(T)(i,k+1)$.  To get a contradiction, assume that $\rho(T)(j,k+1) > \rho(T)(i,k+1)$.  Then the entry $\rho(T)(i,k+1)$ was inserted before the entry $\rho(T)(j,k+1)$.  Since $\rho(T)(j,k) < \rho(T)(i,k+1)$ and $\rho(T)(j,k+1)$ was not yet inserted, $\rho(T)(i,k+1)$ would have been inserted immediately to the right of the cell $(j,k)$.  This contradicts the fact that $\rho(T)(i,k+1)$ is in the $i^{th}$ row.  Therefore $\rho(T)(j,k+1) \le \rho(T)(i,k+1)$ and condition (3) is satisfied, so $\rho(T)$ is an SSYRT.

Finally, we prove that $\rho$ is a bijection by exhibiting its inverse.  Let $F$ be an SSYRT.  Then $\rho^{-1}(F)$ is given by placing the column entries from the $k^{th}$ column of $F$ into the $k^{th}$ column of $\rho^{-1}(F)$ so that they are weakly increasing from bottom to top.  By construction, $\rho^{-1}(F)$ is a row-strict semi-standard Young tableau.  It is also clear that $\rho^{-1} (\rho(T)) = T$.  We must prove that if $\rho^{-1}(F)=\rho^{-1}(G)$ for two SSYRTs $F$ and $G$, then $F=G$.  Assume, to the contrary, that $\rho^{-1}(F) = \rho^{-1}(G)$ and $F \not= G$.  Find the leftmost column, $k$, for which the arrangement of the column entries differs.  Note that $k>1$ by construction.  Find the highest cell $(i,k)$ in column $k$ such that $F(i,k) \not= G(i,k)$.  Then the entry $F(i,k)$ is found in a row (call it $r$) further down in the column in $G$.  So $F(i,k) = G(r,k)$.  If $F(i,k) < G(i,k)$ then cells $(i,k), (r,k),$ and $(i,k-1)$ in $G$ violate condition (3) and therefore $G$ is not an SSYRT.  If $F(i,k) > G(i,k)$, then the entry $G(i,k)$ is found in a row (call is $s$) further down in the column of $F$ and the cells $(i,k), (s,k)$ and $(i,k-1)$ in $F$ violate condition (3).  Therefore $F$ is not an SSYRT.  Therefore there is only one SSYRT mapping by $\rho^{-1}$ to a given row-strict semi-standard Young tableau $T$, which is precisely the SSYRT given by $\rho(T)$.  Thus $\rho$ is a bijection.
\end{proof}

The Young row-strict quasisymmetric Schur functions decompose the Schur functions in the following natural way.

\begin{theorem}{\label{thm:SchurSum}}
If $\lambda$ is an arbitrary partition, then $$s_{\lambda} = \sum_{\alpha: \shape(\alpha) = \lambda'}\mathcal{R}_{\alpha}.$$
\end{theorem}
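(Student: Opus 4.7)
The plan is to combine two well-known bijective facts: first, the classical conjugation identity relating the Schur function $s_\lambda$ to row-strict semi-standard Young tableaux of the conjugate shape, and second, the bijection $\rho$ established in Lemma~\ref{lemma:bijection}.

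First I would recall that transposition across the line $y=x$ gives a weight-preserving bijection between $\SSYT(\lambda)$ and the set of row-strict semi-standard Young tableaux of shape $\lambda'$: columns become rows and vice versa, and the strict/weak increase conditions swap appropriately (this is the observation illustrated in Fig.~\ref{fig:RSSYT}). Consequently, from Definition~\ref{def:schur},
\[
s_\lambda \;=\; \sum_{T \in \SSYT(\lambda)} x^T \;=\; \sum_{U} x^U,
\]
where the second sum ranges over all row-strict semi-standard Young tableaux $U$ of shape $\lambda'$.

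Next I would apply Lemma~\ref{lemma:bijection} to the shape $\lambda'$. The lemma gives a weight-preserving bijection $\rho$ between row-strict semi-standard Young tableaux of shape $\lambda'$ and the disjoint union $\bigsqcup_\alpha \SSYRT(\alpha)$, where $\alpha$ ranges over compositions with $\shape(\alpha)=\lambda'$. Grouping the tableaux $U$ above according to the shape $\alpha = \shape(\rho(U))$ yields
\[
\sum_{U} x^U \;=\; \sum_{\alpha:\,\shape(\alpha)=\lambda'} \;\sum_{F \in \SSYRT(\alpha)} x^F \;=\; \sum_{\alpha:\,\shape(\alpha)=\lambda'} \mathcal{R}_\alpha,
\]
which is exactly the right-hand side of the claimed identity.

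Since both steps are bijections already in hand, there is very little to check; the only thing worth verifying explicitly is that every composition $\alpha$ with $\shape(\alpha)=\lambda'$ actually appears in the image of $\rho$, so that the disjoint union interpretation of Lemma~\ref{lemma:bijection} is unambiguous. This is immediate from the fact that $\rho$ is a bijection onto $\bigsqcup_\alpha \SSYRT(\alpha)$ and each $\SSYRT(\alpha)$ is nonempty (for instance, the filling whose $i$th column is $\{1,2,\dots,\lambda'_i\}$ placed into any admissible composition shape works). So the "main obstacle" is really just bookkeeping: confirming that the shape-partitioning in the second step is exactly what Lemma~\ref{lemma:bijection} delivers, rather than any substantive new argument.
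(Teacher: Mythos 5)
Your proof is correct and takes essentially the same route as the paper: pass from $\SSYT(\lambda)$ to row-strict semi-standard Young tableaux of shape $\lambda'$ by transposition, then apply the weight-preserving bijection $\rho$ of Lemma~\ref{lemma:bijection} and group the images by shape. (The nonemptiness check at the end is not actually needed for the identity, since any empty $\SSYRT(\alpha)$ would simply contribute a zero summand, but it does no harm.)
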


\begin{proof}
This theorem is a direct consequence of Lemma~\ref{lemma:bijection} since the Schur function $s_{\lambda}$ is generated by the weights of all semi-standard Young tableaux whose transpose is a row-strict semi-standard Young tableau of shape $\lambda'$ and the right hand side sums the weights of all Young row-strict composition tableaux whose shape rearranges $\lambda'$.
\end{proof}

The {\it standardization} $st(F)$ of a Young row-strict composition tableau follows a procedure similar to that used to standardize a row-strict semi-standard Young tableau.  Once we provide an appropriate reading order on the entries in a Young row-strict composition tableau, we will describe the standardization procedure algorithmically.

\begin{definition}\label{def:ssyrtstd}
Read the entries of an SSYRT $F$ by column from right to left, so that the entries in each column are read from top to bottom, except the leftmost column, which is read from bottom to top.  This is called the {\it standard reading order} and the resulting word is called the {\it standard reading word of $F$} and is denoted $read(F)$.
\end{definition}

\begin{procedure}
To standardize a Young row-strict composition tableau $F$, use the following procedure.
\begin{enumerate}
\item Let $\beta=(\beta_1, \beta_2, \hdots , \beta_m)$ be the composition describing the content of $F$.
\item Replace the $j^{th}$ occurrence (in standard reading order) of the entry $1$ with the entry $j$.
\item For $i>1$, replace the $j^{th}$ occurrence (in standard reading order) of the entry $i$ with the entry $\displaystyle{(\sum_{k=1}^{i-1} \beta_k) + j}$.  
\end{enumerate}
\end{procedure}

The resulting filling, denoted $st(F)$, is the standardization of $F$.  See Figure \ref{fig:standardization} for an example of the standard reading word of an SSYRT $F$ and the standardization of $F$.

\begin{figure}
\[\begin{array}{cc}
F = \tableau{6 \\ 2 & 3 & 5 & 6 \\  1 & 2 & 3 & 4 & 6 \\ 1 & 4 & 5  & 7} & st(F)=\tableau{13\\4&6&9&12\\2&3&5&7&11\\1&8&10&14}\\\\
read(F)=66475353241126
\end{array}\]
\caption{The reading word and standardization of an SSYRT $F$ of shape $(4,5,4,1)$}
\label{fig:standardization}
\end{figure}

\begin{theorem}{\label{prop:standard}}
The map $\rho$ commutes with standardization in the sense that if $T$ is an arbitrary row-strict semi-standard Young tableau, then $(st(\rho(T)))=\rho(std(T))$ where $std$ is the ordinary standardization of a row-strict semi-\\standard Young tableau.
\end{theorem}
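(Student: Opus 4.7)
The plan is to show that $\rho$ sends the reading-order indexing of $T$ to the standard reading-order indexing of $\rho(T)$ in a way compatible with both standardizations. Since $\rho$ preserves column multisets by construction, it induces a cell-to-cell bijection between $T$ and $\rho(T)$. Once this bijection is shown to map the $j$-th occurrence of value $v$ in $T$'s reading order to the $j$-th occurrence of $v$ in $\rho(T)$'s standard reading order, both $std$ and $st$ will assign the matching label $(\sum_{u<v} k_u) + j$ to corresponding cells, which is exactly the desired equality.

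First I would verify that $\rho(T)$ and $\rho(std(T))$ have the same composition shape. The placement rule in $\rho$ rests on a single comparison: is a candidate left neighbor $e'$ strictly smaller than the entry $e$ being inserted? This comparison is invariant under $std$. If $e' < e$ strictly in $T$ then $std(e') < std(e)$ since all copies of the smaller value are relabeled first. If $e' = e$ with $e'$ in column $k-1$ and $e$ in column $k$, then $std(e') > std(e)$ because $T$'s reading order sweeps columns right-to-left, so the copy in column $k$ is relabeled before the copy in column $k-1$. The case $e' > e$ is symmetric. Fixing the convention that $\rho$ inserts equal-valued entries of a column in $T$-reading order (bottom-to-top within the column), the insertion processes for $T$ and $std(T)$ then produce identical row placements step-by-step, and in particular the same composition shape.

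Next I would establish the cell-level correspondence. Column $1$ is immediate: $T$ is weakly increasing bottom-to-top in column $1$, $\rho$ arranges column $1$ of $\rho(T)$ as the same sequence, and both reading orders traverse column $1$ bottom-to-top. For column $k \geq 2$, $T$'s reading order goes bottom-to-top while $\rho(T)$'s standard reading order goes top-to-bottom. Since $\rho$ places each entry in the highest available row whose left neighbor is strictly smaller and, by the convention above, processes equal-valued entries in $T$-reading order, the first copy of $v$ inserted lands in the highest valid row, the second in the next highest, and so on. Hence the $j$-th occurrence of $v$ in column $k$ of $T$ corresponds under $\rho$ to the $j$-th occurrence of $v$ in column $k$ of $\rho(T)$. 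Both reading orders proceed right-to-left between columns, so this local correspondence extends globally, and combined with the shape matching from Step 1 yields $st(\rho(T)) = \rho(std(T))$.

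The main obstacle is pinning down the tie-breaking: the procedure defining $\rho$ does not explicitly specify the order in which equal-valued entries are inserted within a single column, and the tie-breaking in $std$ is implicit in $T$'s reading order. One must choose a consistent convention and then verify that the vertical reversal (bottom-to-top in $T$ versus top-to-bottom in $\rho(T)$) is precisely what the "highest available row" placement rule of $\rho$ enforces, which is what makes the two standardizations agree on corresponding cells.
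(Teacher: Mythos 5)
Your argument is correct, and its engine is the same one driving the paper's proof: both reading orders sweep the columns from right to left, and the ``highest available row'' rule places equal entries of a column into $\rho(T)$ in top-to-bottom order, exactly reversing their bottom-to-top order in $T$ — so equal entries are relabeled in the same relative order by $std$ and by $st$. Where you differ is in the packaging. The paper first observes that $\rho$ never moves an entry out of its column, so (using the fact, contained in the injectivity half of Lemma~\ref{lemma:bijection}, that an SSYRT is determined by its column multisets) it suffices to match the \emph{column sets} of $std(T)$ and $st(\rho(T))$. That reduction makes your Step~1 — the verification that the comparisons driving $\rho$ are invariant under standardization, so that the insertions building $\rho(std(T))$ and $\rho(T)$ run in lockstep — unnecessary, and it also lets the paper largely sidestep ties within a single column, since two equal entries in the same column contribute to the same column set in either order. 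Your route costs more: you must fix a tie-breaking convention for $\rho$ and establish the full cell-level correspondence, including the lockstep claim. What it buys is a stronger and more explicit conclusion — a canonical cell bijection under which the two standardizations agree label by label — and it avoids leaning on the uniqueness-from-columns fact, which the paper invokes only implicitly. Both proofs are sound; yours is longer but leaves less to the reader.
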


\begin{proof}
Note that the map $\rho$ does not change the set of entries in a given column.  Therefore it is enough to show that the set of entries in an arbitrary column of $std(T)$ is equal to the set of entries in the corresponding column of $st(\rho(T))$.

Let $T$ be a row-strict semi-standard Young tableau.  Consider two equal entries in cells $c_1$ and $c_2$ with $c_1$ appearing earlier in the reading order than $c_2$.  Since the rows of $T$ strictly increase from left to right along rows and weakly increase up columns, the cell $c_2$ must appear weakly to the left and strictly above $c_1$.  Thus, in the standardization of $T$, the entry in $c_1$ is replaced first.  If $c_1$ and $c_2$ are in distinct columns of $T$, then the entries appearing in $c_1$ and $c_2$ are in distinct columns of $\rho(T)$ and the entry in $c_1$ will appear in the reading order of $\rho(T)$ prior to the entry in $c_2$.  If $c_1$ and $c_2$ are in the same column of $T$, then the entry in $c_1$ is placed into $\rho(T)$ prior to the entry in $c_2$, and thus appears earlier in the reading order of $\rho(T)$ as well.  Thus the entries are replaced in $st(\rho(T))$ in the same order as they were in $std(T)$ and the column entries are therefore the same.  Therefore standardization commutes with $\rho$.
\end{proof}

To show that the Young row-strict quasisymmetric Schur functions are indeed quasisymmetric, we provide a method for writing a Young row-strict quasisymmetric Schur function as a positive sum of Gessel's fundamental quasisymmetric functions.  We need the following analogue of the descent set in order to describe this decomposition.

\begin{definition}
The {\it reverse descent set, $\hat{D}(T)$}, of a standard SSYRT $T$, is the subset of $[ n-1]$ consisting of all entries $i$ of $T$ such that $i+1$ appears strictly to the right of $i$ in $T$.
\end{definition}

\begin{proposition}{\label{fundDecomp}}
Let $\alpha, \beta$ be compositions.  Then $$\mathcal{R}_{\alpha} = \sum_{\beta} d_{\alpha \beta} F_{\beta}$$ where $d_{\alpha \beta}$ is equal to the number of standard Young row-strict composition tableaux $T$ of shape $\alpha$ such that $\comp(\hat{D}(T)) =  \beta$.
\end{proposition}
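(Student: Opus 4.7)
The plan is to partition the SSYRTs of shape $\alpha$ by their standardization and show that each standardization class contributes exactly one fundamental quasisymmetric function. Writing
\[
\mathcal{R}_\alpha \;=\; \sum_{S \in \SYRT(\alpha)} \; \sum_{\substack{T \in \SSYRT(\alpha) \\ st(T) = S}} x^T,
\]
the proposition reduces to identifying the inner sum with $F_{\comp(\hat{D}(S))}$.

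To handle the inner sum, I would exhibit a bijection between $\{T \in \SSYRT(\alpha) : st(T) = S\}$ and weakly increasing sequences $a_1 \le a_2 \le \cdots \le a_n$ of positive integers (where $n = |\alpha|$) satisfying $a_i < a_{i+1}$ whenever $i \in \hat{D}(S)$. In the forward direction, given $T$ I set $a_i$ to be the entry of $T$ occupying the cell labeled $i$ in $S$; the sequence is weakly increasing because in the standardization procedure smaller values are replaced first, and strictness at a descent $i$ follows because $a_i = a_{i+1}$ would force the cell of $i$ to precede the cell of $i+1$ in standard reading order, contradicting the fact that $i+1$ lies strictly to the right of $i$.

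The main obstacle is the reverse direction: given $S$ and a sequence $(a_i)$ as above, verify that the filling $T$ obtained by replacing each entry $i$ of $S$ with $a_i$ is a genuine SSYRT of shape $\alpha$ with $st(T) = S$. Row-strictness will follow from the fact that between any two entries $j_1 < j_2$ in a common row of $S$ there is at least one index $i \in [j_1, j_2-1]$ with $i \in \hat{D}(S)$, which I would establish by iterating the definition of $\hat{D}$: a chain of non-descents would place $j_2$ weakly left of $j_1$, contradicting its position strictly to the right. The weak-decrease condition on the leftmost column is automatic since the leftmost column of a standard tableau $S$ is already strictly decreasing top to bottom, and the triple condition reduces to a short case analysis using monotonicity of $(a_i)$ together with the triple condition for $S$. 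Checking $st(T) = S$ reduces to verifying that whenever $a_i = a_{i+1}$ the cell of $i$ precedes the cell of $i+1$ in standard reading order; the only subtle case is when $i$ and $i+1$ lie in the same non-leftmost column of $S$, and here I would use the second cover relation of $\YL_c$ (adding to the rightmost part of a given size) to rule out $i+1$ being inserted strictly above $i$ in that column.

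Once the bijection is established, the inner sum becomes
\[
\sum_{\substack{a_1 \le a_2 \le \cdots \le a_n \\ a_i < a_{i+1} \text{ for } i \in \hat{D}(S)}} x_{a_1} x_{a_2} \cdots x_{a_n} \;=\; F_{\comp(\hat{D}(S))},
\]
by the standard expression of Gessel's fundamental quasisymmetric function as a sum of descent-compatible monomials. Collecting the standard tableaux $S \in \SYRT(\alpha)$ according to the value of $\comp(\hat{D}(S)) = \beta$ then yields $\mathcal{R}_\alpha = \sum_\beta d_{\alpha\beta} F_\beta$, completing the argument.
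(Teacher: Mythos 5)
Your proof is correct, but it takes a genuinely different route from the paper. The paper does not argue from scratch: it takes the Mason--Remmel expansion $\mathcal{RS}_{\alpha}=\sum_{\beta}d_{\alpha\beta}F_{\beta}$ as given and pushes it through the weight-reversing, diagram-reversing bijection $f$, so the entire content of its proof is the bookkeeping that $\comp(D'(T))=\beta$ forces $\comp(\hat{D}(f(T)))=rev(\beta)$ and that reversing monomials sends $F_{\beta}$ to $F_{rev(\beta)}$. You instead give a self-contained $P$-partition--style argument: fiber $\SSYRT(\alpha)$ over $\SYRT(\alpha)$ by standardization and identify each fiber with the $\hat{D}(S)$-compatible weakly increasing sequences. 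Your route costs more verification (that $st$ lands in $\SYRT(\alpha)$, that destandardization preserves the triple condition, and the analysis of equal entries in the reading order), but it buys independence from the SSRRT result and yields the explicit description of standardization fibers, a fact the paper never isolates even though it is the mechanism behind quasisymmetry and behind the fundamental-basis manipulations in Theorem~\ref{thm:skew}. One simplification to your ``subtle case'': when consecutive labels $m$ and $m+1$ of $S$ lie in the same non-leftmost column $k+1$, you do not need the cover relations of $\YL_c$. If $m+1$ occupied a cell $(u,k+1)$ strictly above the cell $(v,k+1)$ containing $m$, then row-strictness gives $S(u,k)<S(u,k+1)=m+1$, hence $S(u,k)<m=S(v,k+1)$, and condition (3) of Definition~\ref{SSYRTdef} applied to the triple $(u,k),(u,k+1),(v,k+1)$ would force $m+1\leq m$; so $m$ always sits above $m+1$ and is read first in the top-to-bottom column reading, exactly as needed for $st(T)=S$. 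The remaining ingredients you cite (strictness at reverse descents via the right-to-left column order, row-strictness from the existence of a descent between row-adjacent labels, and $F_{\comp(D)}$ as the generating function of $D$-compatible sequences) are all sound.
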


See Figure~\ref{fig:FundDecomp} for an example of this decomposition.

\begin{figure}
$$T_1= \tableau{2 & 3 & 5 \\ 1 & 4} \; \qquad T_2=\tableau{2 & 3 & 4 \\ 1 & 5}\; \qquad T_3=\tableau{3 & 4 & 5 \\ 1 & 2}$$

$$\hat{D}(T_1)=\{2,4\} \qquad \quad \hat{D}(T_2)=\{2,3\} \qquad \quad \hat{D}(T_3)=\{1,3,4\}$$

$$\mathcal{R}_{23}=F_{221}+F_{212}+F_{1211}$$
\caption{The decomposition of $\mathcal{R}_{23}$ into fundamental quasisymmetric functions.}
\label{fig:FundDecomp}
\end{figure}

We prove Proposition~\ref{fundDecomp} by using the decomposition of the row-strict quasisymmetric Schur functions into fundamental quasisymmetric functions~\cite{MasRem10} and applying the weight-reversing, diagram-reversing map $f$ from standard  SSRRT to standard SSYRT.

\begin{proposition}[Mason-Remmel~\cite{MasRem10}]
Let $\alpha$ and $\beta$ be compositions.  Then \begin{equation}\label{MasRemDecomp} \mathcal{RS}_{\alpha} = \sum_{\beta} d_{\alpha \beta} F_{\beta} \end{equation} where $d_{\alpha \beta}$ is equal to the number of standard row-strict composition tableaux $T$ of shape $\alpha$ and $\comp(D'(T))=\beta$.  Here $D'(T)$ is the set of all entries $i$ of $T$ such that $i+1$ appears strictly to the left of $i$ in $T$.
\end{proposition}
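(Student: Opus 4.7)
The natural approach is a standardization argument of the kind that yields any Gessel-fundamental decomposition. I would begin by writing
\[
\mathcal{RS}_\alpha \;=\; \sum_{T} x^T \;=\; \sum_{S} \sum_{\substack{T \\ st(T)\,=\,S}} x^T,
\]
where the inner sum is over semi-standard SSRRT $T$ of shape $\alpha$ and the outer sum is over standard SSRRT $S$ of shape $\alpha$. The proposition reduces to showing that for each such $S$ on $n=|\alpha|$ cells, the inner sum equals $F_{\comp(D'(S))}$; summing over $S$ and collecting terms by $\beta=\comp(D'(S))$ then yields the claimed formula.

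The core step is identifying, for a fixed standard $S$, the SSRRTs that standardize to $S$. Let $c_j$ denote the cell of $S$ holding label $j$. The claim is a bijection between fillings $T$ with $st(T)=S$ and sequences $1\le v_1\le v_2\le\cdots\le v_n$ of positive integers satisfying $v_j<v_{j+1}$ precisely when $j\in D'(S)$, via $T(c_j)=v_j$. Granted this bijection, the inner sum evaluates directly to $F_{\comp(D'(S))}$ by the definition of Gessel's fundamentals. The forward direction (sequence $\Rightarrow T$) involves two substeps: (i) $T$ is a valid SSRRT, and (ii) $st(T)=S$. For (i), row-strict decrease is the key observation: if labels $p<q$ lie in the same row of $S$, the net leftward column displacement from $c_p$ to $c_q$ is strictly positive, forcing some $r\in\{p,\ldots,q-1\}$ with $r+1$ strictly left of $r$ in $S$, i.e., $r\in D'(S)$, and hence $v_p<v_q$; the leftmost-column condition and the triple condition follow from short case analyses that invoke the corresponding properties of $S$ together with the monotonicity of the sequence. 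For (ii), one verifies that within any run of equal $v$-values in $T$, the SSRRT reading order visits the cells $c_j$ in label order\,---\,which follows because $j\notin D'(S)$ means $c_{j+1}$ lies weakly to the right of $c_j$, with the appropriate vertical orientation inside any shared column forced by $S$'s triple condition. The reverse direction is immediate: $v_j>v_{j+1}$ would contradict the label ordering in $S=st(T)$, while $v_j=v_{j+1}$ with $j\in D'(S)$ would cause standardization to read $c_{j+1}$ before $c_j$ (since $c_{j+1}$ sits in a strictly earlier column), swapping their labels.

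The main obstacle is the triple-condition verification in the forward direction, together with the careful handling of the SSRRT reading order in same-column equal-value configurations; both amount to case analyses aligning the integer ordering of labels in $S$ with the weak ordering of values $v_j$ in $T$. Once these are nailed down, the identity $\sum_{st(T)=S} x^T = F_{\comp(D'(S))}$ is established and the proposition follows by summation.
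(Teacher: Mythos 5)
The paper does not actually prove this proposition: it is quoted verbatim from Mason--Remmel~\cite{MasRem10} and used as a black box, the only in-paper argument being the transport of that result to the Young setting via the reversing bijection $f$ in the proof of Proposition~\ref{fundDecomp}. So your proposal is not a variant of the paper's argument but a genuine proof of the cited ingredient, and it follows the canonical route (the same $P$-partition-style standardization argument that the cited reference uses): fiber $\mathcal{RS}_\alpha$ over standard tableaux and show each fiber contributes $F_{\comp(D'(S))}$ by identifying it with the weakly increasing sequences that are strict exactly on a superset of $D'(S)$. Your key steps check out. The telescoping argument on column indices correctly forces $v_p<v_q$ for same-row labels, and the deferred ``short case analyses'' do close: for a triple with labels $A=S(i,k)$, $B=S(i,k+1)$, $C=S(j,k+1)$, standardness gives $B<A$ and ($C>A$ or $C<B$), and weak monotonicity of $(v_j)$ transfers this to $v_C\geq v_A$ or $v_C\leq v_B$, which is exactly the SSRRT triple condition (the boundary cases with $0$ entries work the same way).

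The one point you should make explicit rather than leave implicit is the definition of standardization for SSRRT, since the proposition itself never mentions one and the insertion reading order given for SSRRT in Algorithm~\ref{alg:insertssrrt} (rightmost column first) is \emph{not} the right one here. Your argument tacitly assumes that among equal entries the cell in the earlier column receives the smaller label (you invoke ``$c_{j+1}$ sits in a strictly earlier column'' to conclude it is read first). That convention is in fact forced: e.g.\ the SSRRT $\tableau{1\\2&1}$ of shape $(1,2)$ must standardize to $\tableau{1\\3&2}$, not to $\tableau{2\\3&1}$, which violates the triple rule; equivalently, it is the image under $f$ of the SSYRT standard reading order of Definition~\ref{def:ssyrtstd}. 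So you should state the SSRRT reading order (columns left to right, with the within-column orientation dictated by the triple rule as you note), verify it sends SSRRT to standard SSRRT, and only then run the fiber computation. With that pinned down, the bijection with admissible sequences and the identity $\sum_{st(T)=S}x^T=F_{\comp(D'(S))}$ are exactly as you describe, and the proposition follows.
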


\begin{proof}{(of Proposition~\ref{fundDecomp})}

Recall that $\mathcal{RS}_\alpha$ is generated by the set of SSRRT of shape $\alpha$.  Applying the map $f$ to each of the SSRRT generating $\mathcal{RS}_\alpha$ yields an SSYRT of shape $rev(\alpha)$.  Thus, each monomial appearing on the left hand side of \eqref{MasRemDecomp} is sent to its reverse, and the same occurs on the right hand side.  Reversing the monomials sends $F_{\alpha}$ to $F_{rev(\alpha)}$, so we need to show that if $\comp(D'(T))=\beta$, then $\comp(\hat{D}(f(T)))=rev(\beta)$.

Consider an element $i \in D'(T)$.  Then $i+1$ appears strictly to the left of $i$ in $T$.  Reversing the entries in the SSYRT $T$ implies that $n-(i+1)+1=n-i$ appears strictly to the left of $n-i+1$ in $f(T)$.  This implies that $n-i+1$ appears strictly to the right of $n-i$ in $f(T)$, meaning $n-i$ is in the reverse descent set $\hat{D}(f(T))$.  To determine the composition $\comp(\hat{D}(f(T))$ obtained from the reverse descent set $\hat{D}(f(T))$, note that the $k^{th}$ part, $\beta_k$, of $\comp(D'(T))$ from the left, is equal to $i-j$, where $i$ is the $k^{th}$ smallest element of $D'(T)$ and $j$ is the $(k-1)^{th}$ smallest element of $D'(T)$ unless $k=1$ in which case $j=0$.  If $k \not=1$, then $n-i$ is the $k^{th}$ largest element in $\hat{D}(f(T))$ and $n-j$ is the $(k-1)^{th}$ largest element in $\hat{D}(f(T))$.  Therefore the $k^{th}$ part (from the right) of $\hat{D}(f(T))$ is equal to $(n-j) - (n-i) =i-j$.  If $k=1$, then $j=0$ and $n-i$ is the last part of $\comp(\hat{D}(f(T)))$.  Therefore the composition obtained from $\hat{D}(f(T))$ is the reverse of the composition obtained from $D'(T)$ as desired.\end{proof}

\begin{cor}
Every \RSYQS \, is quasisymmetric, since it can be written as a positive sum of quasisymmetric functions.
\end{cor}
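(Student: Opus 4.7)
The plan is to deduce this corollary immediately from Proposition~\ref{fundDecomp}, which has already been proved just above. That proposition writes
\[ \mathcal{R}_\alpha = \sum_\beta d_{\alpha\beta} F_\beta, \]
where the coefficients $d_{\alpha\beta}$ are nonnegative integers counting standard SSYRT of shape $\alpha$ with a prescribed reverse descent composition. The sum is finite because only finitely many compositions $\beta$ of $|\alpha|$ index a nonzero term, so there are no convergence issues to worry about.

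The key step is to invoke the fact that Gessel's fundamental basis $\{F_\beta\}$ consists of quasisymmetric functions; this is built into their definition $F_\beta = \sum_{\gamma \preceq \beta} M_\gamma$, since each monomial quasisymmetric function $M_\gamma$ is quasisymmetric by construction. Because $QSym$ is closed under addition and under scaling by rationals, any nonnegative integer linear combination of the $F_\beta$ lies in $QSym$. Applying this to the decomposition above shows that $\mathcal{R}_\alpha \in QSym$.

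I do not expect any real obstacle here; the only substantive content was already done in Proposition~\ref{fundDecomp}, and this corollary is a one-line observation about closure of $QSym$ under finite nonnegative integer linear combinations. The proof I would write is essentially: \emph{By Proposition~\ref{fundDecomp}, $\mathcal{R}_\alpha$ is a finite nonnegative integer linear combination of fundamental quasisymmetric functions $F_\beta$. Since each $F_\beta \in QSym$ and $QSym$ is closed under such combinations, $\mathcal{R}_\alpha \in QSym$.}
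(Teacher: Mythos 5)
Your proposal is correct and matches the paper's reasoning exactly: the corollary is justified by Proposition~\ref{fundDecomp} together with the observation that each $F_\beta$ is quasisymmetric and $QSym$ is closed under finite nonnegative integer linear combinations. The paper treats this as immediate and offers no further argument, so your one-line writeup is precisely what is intended.
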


In fact, the \RSYQS s form a basis for all quasisymmetric functions.

\begin{theorem}
The set $\{ \mathcal{R}_{\alpha} ( x_1, \hdots , x_k) | \alpha \models n \, {\rm and} \, k \ge n \}$ forms a $\mathbb{Z}$-basis for \newline $QSym_n ( x_1, x_2, \hdots , x_k)$. \end{theorem}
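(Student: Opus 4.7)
The plan is to derive the basis property from the fundamental decomposition of Proposition~\ref{fundDecomp} via a unitriangularity argument. Since $\{F_\beta : \beta \models n\}$ is a $\mathbb{Z}$-basis of $QSym_n(x_1,\ldots,x_k)$ for $k \geq n$ with $2^{n-1}$ elements, and $\#\{\alpha \models n\} = 2^{n-1}$ as well, it suffices to show that the matrix $(d_{\alpha\beta})$ appearing in Proposition~\ref{fundDecomp} is invertible over $\mathbb{Z}$.

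To this end I would introduce, for each composition $\alpha \models n$, the ``super-standard'' SYRT $T_\alpha$ of shape $\alpha$ obtained by filling the diagram with $1,2,\ldots,n$ row by row from the bottom up, and within each row from left to right. A direct check confirms $T_\alpha$ is a valid SYRT: rows are strictly increasing by construction, the leftmost column is strictly decreasing from top to bottom since each new row begins with an integer one greater than the last entry of the preceding row, and the triple condition is vacuous because $T_\alpha(j,k) \geq T_\alpha(i,k+1)$ whenever $i<j$ and both cells exist. By inspection, an index $i$ lies in $\hat{D}(T_\alpha)$ exactly when $i$ and $i+1$ occupy adjacent cells in the same row, so $\hat{D}(T_\alpha) = [n-1] \setminus \{\alpha_1,\alpha_1+\alpha_2,\ldots,\alpha_1+\cdots+\alpha_{\ell(\alpha)-1}\}$, which is the complement of the subset corresponding to $\alpha$, and hence $\comp(\hat{D}(T_\alpha)) = \tilde{\alpha}$.

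Next I would establish the key minimality claim: $T_\alpha$ is the unique SYRT of shape $\alpha$ whose descent composition equals $\tilde{\alpha}$, and every other SYRT $T$ of shape $\alpha$ satisfies $\comp(\hat{D}(T)) > \tilde{\alpha}$ in lexicographic order. The idea is that any deviation from the super-standard filling must displace a pair of consecutive integers $i, i+1$ out of their row-adjacent position, which simultaneously creates a descent at some index strictly larger than the smallest row-boundary value and destroys a descent at a smaller index, lex-increasing the resulting descent composition.

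Finally, since $\alpha \mapsto \tilde{\alpha}$ is an involution on compositions of $n$, reindexing the columns of $(d_{\alpha\beta})$ by this involution renders the matrix upper unitriangular with respect to lex order, hence unimodular. Combined with the basis property of $\{F_\beta\}$, this forces $\{\mathcal{R}_\alpha(x_1,\ldots,x_k) : \alpha \models n\}$ to be a $\mathbb{Z}$-basis of $QSym_n(x_1,\ldots,x_k)$ whenever $k \geq n$. The main obstacle is the minimality claim: verifying that every non-super-standard SYRT of shape $\alpha$ strictly increases the descent composition in lex order requires a careful case analysis exploiting the triple condition and the leftmost-column constraint to track how a local modification of the filling propagates through the SYRT structure.
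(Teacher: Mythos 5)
Your overall architecture is sound and is in fact the same as the paper's: both arguments establish that the transition matrix from $\{\mathcal{R}_\alpha\}$ to $\{F_\beta\}$ furnished by Proposition~\ref{fundDecomp} is unitriangular with respect to a suitable total order. Your preliminary computations are also correct: the ``super-standard'' filling $T_\alpha$ is a valid SYRT, and $\comp(\hat{D}(T_\alpha))=\tilde{\alpha}$, so $d_{\alpha\tilde{\alpha}}\geq 1$. The genuine gap is that the entire weight of the proof rests on the minimality claim --- that $T_\alpha$ is the \emph{unique} SYRT of shape $\alpha$ with descent composition $\tilde{\alpha}$ and that every other SYRT $T$ of shape $\alpha$ satisfies $\comp(\hat{D}(T))>_{lex}\tilde{\alpha}$ --- and this claim is only asserted, supported by a heuristic (``any deviation \ldots simultaneously creates a descent at some larger index and destroys a descent at a smaller index'') that is not an argument and does not engage with the triple condition at all. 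You acknowledge this yourself. As written, the proposal is therefore an incomplete proof.

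The good news is that your lemma is true, and it can be closed by a forced-placement induction rather than the vague ``displacement'' picture. Note that $\comp(\hat{D}(T))>_{lex}\tilde{\alpha}$ is equivalent to saying that the smallest element of the symmetric difference $\hat{D}(T)\,\triangle\,\hat{D}(T_\alpha)$ lies in $\hat{D}(T_\alpha)\setminus\hat{D}(T)$. One then shows by induction on $j$: if the descent/non-descent status of $1,\ldots,j-1$ in $T$ agrees with that in $T_\alpha$, then the entries $1,\ldots,j$ occupy exactly the same cells as in $T_\alpha$ (the entry $1$ must sit in cell $(1,1)$; a run of consecutive descents forces consecutive cells of the current row because every smaller entry already lives in the filled lower rows; and once rows $1,\ldots,i$ are full the next entry is forced into cell $(i+1,1)$ by the first-column condition). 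From this one deduces that the first disagreement cannot occur at a partial sum of $\alpha$ (there is no room strictly to the right), which is exactly the lex statement, and that agreement everywhere forces $T=T_\alpha$, giving $d_{\alpha\tilde{\alpha}}=1$. This is essentially the same forcing argument the paper runs, except the paper organizes it around the dominance order on the underlying partitions of the complements $\tilde{\beta}$ (via the observation that runs of consecutive descents form horizontal strips, so $j$ strips cannot cover more cells than the $j$ largest rows) rather than around lex order. Either order works; the dominance version gives the triangularity inequality almost for free from the strip count, whereas your lex version requires the full forced-placement induction from the start --- but in exchange your construction of $T_\alpha$ makes the diagonal entry completely explicit.
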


\begin{proof}
Let ${ \bf x_k}$ denote $(x_1, \ldots, x_k)$.  It is enough to prove that the transition matrix from Young row-strict quasisymmetric Schur functions to fundamental quasisymmetric functions is upper uni-triangular with respect to a certain order on compositions.  Order the compositions indexing the Young row-strict quasisymmetric Schur functions by any linear extension of the dominance order on the underlying partitions.  (That is, a linear extension of the partial order where $\alpha < \beta$ if and only if $\shape(\alpha)  < \shape(\beta)$ under dominance.)  Similarly, order the compositions indexing the fundamental quasisymmetric functions by the same order but on their complementary compositions.

Fix a positive integer $n$ and a composition $\alpha=(\alpha_1, \alpha_2, \hdots , \alpha_{\ell(\alpha)})$ of $n$.  Consider a summand $F_{\beta}({\bf x_k})$ appearing in $\mathcal{R}_{\alpha}({\bf x_k})$ and consider the composition $\tilde{\beta}$ complementary to $\beta$.  Since $k \ge n$, we know that such an $F_{\beta}({\bf x_k})$ exists.  We claim that $\shape(\tilde{\beta}) \le \shape(\alpha)$ and, moreover, if $\shape(\tilde{\beta}) = \shape(\alpha)$, then $\tilde{\beta} = \alpha$.

Since $F_{\beta}({\bf x_k})$ appears in $\mathcal{R}_{\alpha}({\bf x_k})$, there must exist a standard Young row-strict composition tableau $F$ of shape $\alpha$ with reverse descent set \\$D=\{b_1, b_2, \hdots, b_k\}$ such that $\comp(D)=\beta$.  Each entry $i$ in $D$ must appear in $F$ strictly to the left of $i+1$.  We note however there are examples of standard Young row strict composition tableaux where $i+1$ can appear in the same row, a row above $i$, or a row below $i$.  Nevertheless, the consecutive entries in $D$ appear in $F$ as ``horizontal strips'' in that no two cells can lie in the same column.  Each such horizontal strip, together with the entry one greater than the largest entry in the strip, corresponds to a part of $\tilde{\beta}$ since $\tilde{\beta}$ arises from the complement of $D$.  This implies that the sum of the largest $j$ parts of $\tilde{\beta}$ must be less than or equal to the sum of the largest $j$ parts of $\alpha$.  That is, $j$ horizontal strips cannot cover more cells than the number of cells in the largest $j$ rows of $F$. Therefore $\shape(\tilde{\beta}) \le \shape(\alpha)$.

Assume that $\shape(\tilde{\beta}) = \shape(\alpha)$.  Since the length of $\tilde{\beta}$ is equal to the length of $\alpha$, each cell in the leftmost column of $F$ must be the start of a horizontal strip. Moreover, each horizontal strip must have cells in a block of consecutive columns. That is, the horizontal strip corresponding to the largest part of $\tilde{\beta}$,  $\lambda(\tilde{\beta})_1$, has the same size as the largest row in $\alpha$ and hence must contain one cell in each column even though those cells may not lie in the same row. But then the horizontal strip corresponding the second largest part of $\tilde{\beta}$ has the same size as the second largest part of $\alpha$ so it must have one cell in each column up to the rightmost column in the second largest row  of $\alpha$, etc.  Next we claim that row numbers of the cells in any horizontal strip must weakly increase, reading from left to right. That is, suppose that $x-1$ and $x$ are consecutive entries of a horizontal strip such that $x$ appears in a lower row than $x-1$ in $F$. Then the triple rule (condition (3) in Definition~\ref{SSYRTdef}) implies that the cell immediately to the right of the cell containing $x-1$ in the column containing $x$ must contain an entry less than or equal to $x$.  Since $F$ is standard, no entry can appear twice, and hence this entry must be less than $x-1$.  But this contradicts the fact that the row entries are increasing from left to right.

Now consider the set of elements in the descent set $1,2, \ldots, \tilde{\beta}_1-1$ of $F$ corresponding to the first part of $\tilde{\beta}$. We know that  $1,2 \ldots, \tilde{\beta}_1$ forms a horizontal strip $h$ in $F$ and $1$ must be in the leftmost column.  It must be the case that $\tilde{\beta}_1$ is in the first row since otherwise the first column would not be strictly increasing reading from bottom to top. Since the column numbers in $h$ must strictly increase reading from left to right, it follows that $1,\hdots , \tilde{\beta}_1$ occupy consecutive cells in the first row of $F$. Since no element from a higher strip can appear in this row, this means that the first row of $F$ has size $\tilde{\beta}_1$. That is, $\alpha_1 = \tilde{\beta}_1$.  Iterating this argument implies that $\tilde{\beta}_i = \alpha_i$ for all $i$, and hence $\tilde{\beta} = \alpha$ as claimed.  Therefore the transition matrix is upper uni-triangular and the proof is complete.
\end{proof}

\section{Skew row-strict quasisymmetric Schur functions}\label{sec:skewrs}

\subsection{The Hopf algebra of quasisymmetric functions}

We briefly discuss results from Hopf algebras that will be pertinent to the definition of $\mathcal{R}_{\alpha//\beta}$ and refer the reader to any of the standard references~\cite{Abe80, HGK10, Swe69} for more information and detailed definitions.

An {\it associative algebra $A$ over a field $k$} is a $k$-vector space with an associative bilinear map $m:  A \otimes A \longrightarrow A$ and {\it unit} $u:k \longrightarrow A$ which sends $1 \in k$ to the two-sided multiplicative identity element $1_A \in A$.  A {\it co-associative algebra $C$} is a $k$-vector space $C$ with a $k$-linear map $\Delta: C \longrightarrow C \otimes C$ (called {\it co-multiplication}) and {\it co-unit} $\epsilon: C \longrightarrow k$ such that 

\begin{enumerate}
\item $(1_C \otimes \Delta) \circ \Delta = (\Delta \otimes 1_C) \circ \Delta$
\item $(1_C \otimes \epsilon) \circ \Delta = 1_C = (\epsilon \otimes 1_C) \circ \Delta$.
\end{enumerate}
In the following, we use {\it Sweedler notation} for the coproduct and abbreviate $$\displaystyle{\Delta(c) = \sum_i c_{(1)}^{(i)} \otimes c_{(2)}^{(i)}} \qquad {\rm by} \qquad \Delta(c) = \sum_{(c)} c_{(1)} \otimes c_{(2)}.$$

Given a coalgebra $C$ and an algebra $A$, the $k$-linear maps $Hom(C,A)$ can be endowed with an associative algebra structure called the {\it convolution algebra} which sends $f,g \in Hom(C,A)$ to $f \star g$ defined by $(f \star g)(c) = \sum f(c_1)g(c_2)$.  A {\it Hopf Algebra} is a bialgebra $A$ which contains an two-sided inverse $S$ (called an {\it antipode}) under $\star$ for the identity map $1_A$.  (Note that every connected, graded bialgebra is a Hopf algebra~\cite{Ehr96}.)

Recall that the {\it dual space $V^{\star}$} of a finite-dimensional $k$-vector space $V$ is obtained by reversing $k$-linear maps so that $V^{\star}:= Hom(V,k)$.  If each $V_n$ in a graded vector space $\displaystyle{V= \oplus_{n \ge 0} V_n}$ is finite-dimensional, then $V$ is said to be of {\it finite type}.  Since the dual of a finite-type algebra is a coalgbera and vice-versa, the dual $H^{\star}= \oplus_{n \ge 0} H_n^{\star}$ of a finite-type Hopf algebra $\displaystyle{ H= \oplus_{n \ge 0} H_n}$ is a Hopf algebra called the {\it Hopf dual} of $H$.  Therefore there exists a non-degenerate bilinear form $\langle \cdot , \cdot \rangle : H \otimes H^{\star} \longrightarrow k$ such that for any basis $\{ B_i \}_{i \in I}$ (for some indexing set $i$) of $H_n$ and its dual basis $\{ D_i \}_{ i \in I}$, we have $\langle B_i, D_j \rangle = \delta_{ij}$.  This duality allows us to define structure constants and skew elements ~\cite{LLS09}

\begin{enumerate}
\item $$B_i \cdot B_j = \sum_h a_{i,j}^h B_h \iff \Delta D_h = \sum_{i,j} a_{i,j}^h D_i \otimes D_j := \sum_j D_{i/j} \otimes D_j$$
\item $$D_i \cdot D_j = \sum_h b_{i,j}^h D_h \iff \Delta B_h = \sum_{i,j} b_{i,j}^h B_i \otimes B_j := \sum_j B_{i/j} \otimes B_j.$$
\end{enumerate}

In $QSym$, the coproduct is given by 
\begin{equation} \label{eq:coprod}
\Delta F_\alpha = \sum_{\substack{\beta\gamma = \alpha, \text{ or }\\ \beta\odot \gamma = \alpha}} F_\beta \otimes F_\gamma
\end{equation}
where $\beta\gamma$ denotes concatenation and $\beta\odot \gamma$ denotes {\it almost concatenation}.  That is, if $\beta = (\beta_1, \ldots, \beta_k)$ and $\gamma = (\gamma_1, \ldots, \gamma_m)$, then $\beta\gamma = (\beta_1, \ldots, \beta_k, \gamma_1, \ldots, \gamma_m)$ while $\beta\odot \gamma = (\beta_1, \ldots, \beta_{k-1}, \beta_k+\gamma_1, \gamma_2, \ldots, \gamma_m)$.

We use this to define the {\it skew Young row-strict quasisymmetric Schur functions}.   

\begin{definition}\label{def:skew}
Given a composition $\alpha$,
\[\Delta \mathcal{R}_\alpha = \sum_{\beta} \mathcal{R}_\beta \otimes \mathcal{R}_{\alpha//\beta}.\]
\end{definition}

\subsection{Combinatorial formulas for skew Young row-strict quasisymmetric Schur functions}

Recall that Schur functions can be computed explicitly using the following combinatorial rule~\cite{Ful97}:  $$s_{\lambda / \mu} = \sum_{T \in SSYT(\lambda / \mu)} x^{cont(T)} = \sum_{T \in SYT(\lambda/ \mu)} F_{Des(T)},$$ where $F_{S}$ is the Gessel fundamental quasisymmetric function indexed by the set $S$.  We provide an analogous combinatorial formula for the skew Young row-strict quasisymmetric Schur functions.

\begin{theorem}\label{thm:skew}
Let $\alpha // \beta$ be a skew composition.  Then $\mathcal{R}_{\alpha // \beta}$ is the sum over all SSYRT $T$ of shape $\alpha // \beta$ of the weights $x^T$; i.e. $$\mathcal{R}_{\alpha // \beta} = \sum_{T \in \SSYRT(\alpha // \beta)} x^T = \sum_{T \in \SYRT(\alpha // \beta)} F_{D'(T)}.$$  
\end{theorem}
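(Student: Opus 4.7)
The plan is to derive both equalities from the fundamental expansion of the straight-shape $\mathcal{R}_\alpha$ in Proposition~\ref{fundDecomp}, the coproduct formula~(\ref{eq:coprod}), and Definition~\ref{def:skew}. Starting with $\mathcal{R}_\alpha = \sum_{T \in \SYRT(\alpha)} F_{\hat{D}(T)}$, I would apply $\Delta$ and use~(\ref{eq:coprod}) to write
\[\Delta F_{\hat{D}(T)} = \sum_{k=0}^{n} F_{\hat{D}(T) \cap [1,k-1]} \otimes F_{(\hat{D}(T) \cap [k+1,n-1]) - k},\]
where splitting the subset $\hat{D}(T) \subseteq [1,n-1]$ at position $k$ produces a concatenation when $k \in \hat{D}(T) \cup \{0, n\}$ and an almost concatenation otherwise, so the two cases in~(\ref{eq:coprod}) are indexed uniformly by the choice of $k \in \{0, 1, \ldots, n\}$.

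The crux is a splitting bijection. For each $T \in \SYRT(\alpha)$ and $0 \le k \le n$, let $T_1$ be the restriction of $T$ to cells with labels in $\{1,\ldots,k\}$ and let $T_2$ be the restriction to the remaining cells, with each label $\ell$ replaced by $\ell - k$. By the correspondence between SYRT and saturated chains in $\YL_c$ given in Section~\ref{sec:comptab}, the cells of $T_1$ form a shape $\beta = \beta(T,k)$ with $\beta <_{\YL_c} \alpha$, so $T_1 \in \SYRT(\beta)$ and $T_2 \in \SYRT(\alpha // \beta)$. Gluing is a two-sided inverse, giving a bijection
\[\SYRT(\alpha) \times \{0, 1, \ldots, n\} \; \longleftrightarrow \; \bigsqcup_{\beta} \SYRT(\beta) \times \SYRT(\alpha // \beta).\]
I would then verify the descent-set identities $\hat{D}(T_1) = \hat{D}(T) \cap [1, k-1]$ and $\hat{D}(T_2) = (\hat{D}(T) - k) \cap [1, n-k-1]$, which imply that the coproduct term arising from the pair $(T, k)$ is precisely $F_{\hat{D}(T_1)} \otimes F_{\hat{D}(T_2)}$.

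Summing over $T$ and grouping by $\beta$ then yields
\[\Delta \mathcal{R}_\alpha = \sum_{\beta} \mathcal{R}_\beta \otimes \left( \sum_{T_2 \in \SYRT(\alpha // \beta)} F_{\hat{D}(T_2)} \right).\]
Comparing this with Definition~\ref{def:skew} and using linear independence of $\{\mathcal{R}_\beta\}$ in the first tensor slot gives $\mathcal{R}_{\alpha // \beta} = \sum_{T \in \SYRT(\alpha // \beta)} F_{\hat{D}(T)}$, the second equality of the theorem (with $\hat{D}$ playing the role of $D'$). For the first equality, I would apply the standardization map to SSYRTs of skew shape: every SSYRT $T$ has a unique standardization $\hat{T} = st(T) \in \SYRT(\alpha // \beta)$, and the generating function of the monomials $x^T$ over all $T$ with $st(T) = \hat{T}$ is exactly $F_{\hat{D}(\hat{T})}$, by the same mechanism that underlies Proposition~\ref{fundDecomp}.

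I expect the main obstacles to be (1) checking that $T \mapsto (T_1, T_2)$ produces valid SYRTs, i.e.\ that the triple condition for $T_1$ (with unfilled cells regarded as $\infty$) and for $T_2$ (with cells of $\beta$ regarded as $0$) is inherited from the triple condition on $T$, and (2) matching the descent-set split with the concatenation/almost-concatenation dichotomy of~(\ref{eq:coprod}), with particular care at the boundary cases $k = 0$ and $k = n$ and at the distinction $k \in \hat{D}(T)$ versus $k \notin \hat{D}(T)$. Once these largely routine checks are in place, the deduction of both equalities is immediate.
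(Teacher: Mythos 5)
Your proposal is correct and follows essentially the same route as the paper: your splitting of a standard tableau $T$ at label $k$ into $T_1$ and $T_2$ is exactly the paper's $\mho_k(T)$ and $\Omega_{n-k}(T)$, your descent-set identities are the paper's Lemma~\ref{lem:comp}, and your gluing inverse is Lemma~\ref{lem:glue}, after which both arguments conclude by comparing with Definition~\ref{def:skew}. The only (welcome) addition is that you explicitly handle the first equality via standardization, a step the paper leaves implicit.
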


Before proving Theorem~\ref{thm:skew}, we introduce several concepts and lemmas which will play an important role in the proof.  

\begin{definition}
If $T$ is a filling of shape $\alpha//\beta$, then $g_i$ is the map that adds $i$ to each entry in $T$.
\end{definition}

Let $T$ be a SYRT of shape $\alpha \vDash n$.  Then define $\mho_i(T)$ to be the filling obtained from $T$ by considering the cells with labels less than or equal to $i$.  Similarly, let $\Omega_i(T)$ be the skew filling obtained from $T$ by considering the cells with labels greater than $n-i$ and standardizing.  Note that $T = \mho_i(T) \cup (g_i (\Omega_{n-i}(T)))$, as seen in Fig.~\ref{fig:decomp}.

\begin{figure}
\[\begin{array}{ccc}
T = \tableau{7&8&10\\6\\2&3&4&9\\1&5} \quad&\quad \mho_6(T) = \tableau{*&*&*\\6\\2&3&4&*\\1&5}\quad &\quad \Omega_4(T) = \tableau{1&2&4\\*\\*&*&*&3\\*&*}\\\\
\hat{D}(T) = \{2,3,7\} & \hat{D}(\mho_6(T)) = \{2,3\} &\hat{D}(\Omega_4(T)) = \{1\}\\
\comp(\hat{D}(T)) = (2,1,4,3) & \comp(\hat{D}(\mho_6(T))) = (2,1,3) & \comp(\hat{D}(\Omega_4(T))) = (1,3)
\end{array}
\]
\caption{Decomposing $T$ using $\mho_i(T)$ and $\Omega_{n-i}(T)$.}\label{fig:decomp}
\end{figure}

\begin{lemma}\label{lem:glue}
Let $\beta \subset \gamma$ be a composition contained in $\gamma$ with $|\beta| = i$.  If $T(\gamma/ / \beta)$ is a skew SSYRT of shape $\gamma / / \beta$ and $T(\beta)$ is an SSYRT of shape $\beta$, then $T(\beta) \cup g_i (T(\gamma // \beta))$ is an SSYRT.
\end{lemma}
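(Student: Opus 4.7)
The plan is to verify the three conditions of Definition~\ref{SSYRTdef} for $U := T(\beta) \cup g_i(T(\gamma//\beta))$, relying on the fact (implicit in the preceding $\mho_i$/$\Omega_{n-i}$ decomposition) that the entries of $T(\beta)$ are bounded above by $i$. The shift $g_i$ then places every entry coming from $T(\gamma//\beta)$ strictly above $i$, creating a clean numerical separation between the two sources of entries in $U$.

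For condition (1), strict increase along rows is inherited within each piece; at the unique cell in each row where a $\beta$-entry is followed by a $g_i$-entry, the left value is $\leq i$ and the right value is $>i$, so the inequality is strict. For condition (2), because compositions have only positive parts, the leftmost column of $\gamma$ splits cleanly into a bottom segment of $\ell(\beta)$ cells from $\beta$ and a top segment of $\ell(\gamma) - \ell(\beta)$ cells from $\gamma//\beta$. Each segment is weakly decreasing by the respective SSYRT hypothesis, and the top-to-bottom transition from a $>i$ value to a $\leq i$ value preserves weak decrease.

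Condition (3) is the heart of the proof. I would case-analyze on where each of the three cells $(j,k)$, $(j,k+1)$, $(i,k+1)$ lies---in $\beta$, in $\gamma//\beta$, or outside $\gamma$---and use the left-justified structure of $\beta$ and $\gamma$ to immediately rule out any pattern with a $\beta$-cell (respectively $\gamma$-cell) to the right of a non-$\beta$-cell (respectively non-$\gamma$-cell) in the same row. The all-in-$\beta$ and all-in-$\gamma//\beta$ patterns are covered directly by the triple conditions of $T(\beta)$ and $T(\gamma//\beta)$, noting that $g_i$ preserves all inequalities. The critical mixed pattern is $(j,k)\in\beta$ with $(j,k+1),(i,k+1)\in\gamma//\beta$: here the triple condition for $T(\gamma//\beta)$, applied with the convention $T(\gamma//\beta)(j,k)=0$, has trigger $0 < T(\gamma//\beta)(i,k+1)$ automatically satisfied and therefore yields $T(\gamma//\beta)(j,k+1)\leq T(\gamma//\beta)(i,k+1)$, which after adding $i$ is exactly what $U$ requires. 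The symmetric pattern $(j,k),(i,k+1)\in\beta$ with $(j,k+1)\in\gamma//\beta$ is handled by the triple condition of $T(\beta)$: the $\infty$-convention outside $\beta$ forces $T(\beta)(j,k)\geq T(\beta)(i,k+1)$, so the trigger $U(j,k)<U(i,k+1)$ fails and nothing needs checking. All remaining mixed patterns fall to the numerical separation: either the trigger is vacuous because $U(j,k)>i\geq U(i,k+1)$, or the conclusion is automatic because $U(j,k+1)\leq i < U(i,k+1)$.

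The main obstacle is simply the bookkeeping in condition (3): listing every cell-membership pattern and routing each to the correct source of information---the triple condition of $T(\beta)$ with its $\infty$-convention, the triple condition of $T(\gamma//\beta)$ with both its $0$- and $\infty$-conventions, or the numerical gap between the two value ranges. No individual step is difficult; care is needed only to ensure no pattern is missed.
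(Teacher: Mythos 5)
Your proposal is correct and follows essentially the same route as the paper's proof: verify the three SSYRT conditions directly, with conditions (1) and (2) following from the numerical separation between the $\leq i$ entries of $T(\beta)$ and the $>i$ entries of $g_i(T(\gamma//\beta))$, and the triple condition handled by a case analysis on which piece each of the three cells lies in, invoking the $0$- and $\infty$-conventions exactly as you describe. Your explicit flagging of the implicit hypothesis that the entries of $T(\beta)$ are bounded by $i$ is in fact a point the paper's own proof uses silently.
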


\begin{proof}
We need to check that $T':=T(\beta) \cup g_i (T(\gamma // \beta))$ satisfies the three conditions needed to be an SSYRT.  The first is satisfied by construction since given a row $r$ in $T'$, the entries from row $r$ of $T(\beta)$ are strictly increasing from left to right, the entries from row $r$ of $ g_i (T(\gamma // \beta))$ are strictly increasing from left to right, and the last entry in row $r$ of $T(\beta)$ is less than the first entry in row $r$ of $ g_i (T(\gamma // \beta))$.

The second condition is satisfied by construction since the entries in the leftmost column of $ g_i (T(\gamma // \beta))$ weakly decrease from top to bottom, the entries in the leftmost column of $T(\beta)$ weakly decrease from top to bottom, and the lowest entry in the leftmost column of $ g_i (T(\gamma // \beta))$ is greater than the highest entry in the leftmost column of $T(\beta)$.

Finally, to check the third condition, consider a triple of cells $a:=T'(j,k), $ $b:=T'(j,k+1), \, c:=T'(i,k+1)$.  If $a<c$, then there are several cases we must consider.

\underline{Case 1:  $a,c \in T(\beta)$:}  If $b \in T(\beta)$ then we are done since the entries $a,b,c$ must satisfy the triple rule in $T(\beta)$.  But if $b \notin T(\beta)$, then $b$ would be considered to be equal to $\infty$, causing $a,b,c$ to violate the triple condition in $\beta$, a contradiction.  Therefore $b \in T(\beta)$ and $a,b,c$ satisfy the triple condition.

\underline{Case 2:  $a \in T(\beta), c \in  g_i (T(\gamma // \beta))$:}  If $b \in T(\beta)$ then we are done since all entries from $T(\beta)$ are less than all entries from $ g_i (T(\gamma // \beta))$, so $b \le c$.  If $b \notin T(\beta)$, then $b \le c$ since $a$ is considered to be zero in $ g_i (T(\gamma // \beta))$, so $b \le c$ in $T'$.

\underline{Case 3:  $a, c \in  g_i (T(\gamma // \beta))$:}  Then $b \in  g_i (T(\gamma // \beta))$, so the triple $a,b,c$ satisfies the triple condition in $ g_i (T(\gamma // \beta))$ so the proof is complete.
\end{proof}

\begin{lemma}\label{lem:comp}
Let $T$ be a SYRT of shape $\alpha$.  Then for any $i$ ($0 \leq i\leq |\alpha|$), if $i \in \hat{D}(T)$, then
\[\comp(\hat{D}(T)) = \comp(\hat{D}(\mho_i(T))) \cdot \comp(\hat{D}(\Omega_{n-i}(T))),\]
and if $i \notin \hat{D}(T)$, 
\[\comp(\hat{D}(T)) = \comp(\hat{D}(\mho_i(T))) \odot \comp(\hat{D}(\Omega_{n-i}(T))).\]
\end{lemma}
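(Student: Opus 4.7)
The plan is to reduce the statement to an elementary set-theoretic computation. First I would establish the three-piece decomposition of the reverse descent set:
$$\hat{D}(T) \cap \{1,\ldots,i-1\} = \hat{D}(\mho_i(T)), \qquad \hat{D}(T) \cap \{i+1,\ldots,n-1\} = i + \hat{D}(\Omega_{n-i}(T)).$$
The first equality holds because for $j<i$ the cells labelled $j$ and $j{+}1$ both belong to $\mho_i(T)$ and occupy the same positions as in $T$; whether $j{+}1$ lies strictly to the right of $j$ therefore coincides in the two fillings. The second equality holds because for $j>i$ the cells labelled $j$ and $j{+}1$ both lie in $\Omega_{n-i}(T)$, and standardization merely subtracts $i$ from every label without moving any cells, so a descent at $j$ in $T$ is the same as a descent at $j-i$ in $\Omega_{n-i}(T)$.

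Combining these with the obvious partition of $[n-1]$ into $\{1,\ldots,i-1\}\sqcup\{i\}\sqcup\{i+1,\ldots,n-1\}$, and setting $A := \hat{D}(\mho_i(T)) \subseteq [i-1]$ and $B := \hat{D}(\Omega_{n-i}(T)) \subseteq [n-i-1]$, I would then have
$$\hat{D}(T) = A \sqcup \{i\} \sqcup (i+B) \text{ if } i \in \hat{D}(T), \qquad \hat{D}(T) = A \sqcup (i+B) \text{ if } i \notin \hat{D}(T).$$

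The remainder is a direct unpacking of $\comp$. Writing $A = \{a_1<\cdots<a_p\}$ and $B = \{b_1<\cdots<b_q\}$, the compositions $\comp(A) = (a_1,\,a_2-a_1,\,\ldots,\,a_p-a_{p-1},\,i-a_p)$ and $\comp(B) = (b_1,\,b_2-b_1,\,\ldots,\,b_q-b_{q-1},\,n-i-b_q)$ have parts summing to $i$ and $n-i$ respectively. Inserting the element $i$ into $A \sqcup (i+B)$ produces a clean break between the gaps of $A$ and the shifted gaps of $B$, yielding
$$\comp\bigl(A\sqcup\{i\}\sqcup(i+B)\bigr) \;=\; \comp(A)\cdot\comp(B),$$
while omitting $i$ fuses the final gap $i-a_p$ of $\comp(A)$ with the initial shifted gap $b_1$ of $\comp(B)$ into a single part $(i-a_p)+b_1$, giving
$$\comp\bigl(A\sqcup(i+B)\bigr) \;=\; \comp(A)\odot\comp(B).$$

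Nothing in the argument is deep; the only thing to watch is that the standardization defining $\Omega_{n-i}(T)$ preserves the left-right ordering of cells (immediate from the definition), and that the edge cases $i \in \{0, n\}$ are handled by the conventions $\comp(\emptyset) = ()$ and the trivial action of $\cdot$ and $\odot$ on the empty composition. The main obstacle, such as it is, is simply keeping the indices straight between the three index regimes $j<i$, $j=i$, and $j>i$.
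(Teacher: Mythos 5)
Your proof is correct and follows essentially the same route as the paper: decompose $\hat{D}(T)$ into the part below $i$ (which is $\hat{D}(\mho_i(T))$), the part above $i$ (which is $i+\hat{D}(\Omega_{n-i}(T))$), and the element $i$ itself, then unpack $\comp$ to see that including $i$ gives concatenation and omitting it fuses two parts into near-concatenation. If anything, you are slightly more explicit than the paper in justifying why the two restricted descent sets coincide with those of $\mho_i(T)$ and the standardized $\Omega_{n-i}(T)$, a step the paper simply asserts.
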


\begin{proof} Suppose $\hat{D}(T) = \{a_1,a_2,\ldots,a_k\}$ and let $0\leq i\leq |\alpha|$.  Then, if $i \in \hat{D}(T)$, $i = a_m$ for some $m$.  Then $\hat{D}(\mho_i(T)) = \{a_j : a_j<i\}$ and \[\comp(\hat{D}(\mho_i(T))) = (a_1, a_2-a_1, \ldots, i- a_{m-1}).\]  Similarly, $\hat{D}(\Omega_{n-i}(T)) = \{ a_j-i:a_j>i\}$ and 
\begin{align*}
\comp(\hat{D}(\Omega_{n-i}(T)))&  =  (a_{m+1}-i, a_{m+2}-i - (a_{m+1}-i), \ldots, n-i - (a_k-i))\\
& = (a_{m+1}-a_m, a_{m+2}-a_{m+1}, \ldots, n-a_k).
\end{align*}
Thus, 
\[
\comp(\hat{D}(\mho_i(T)))\cdot \comp(\hat{D}(\Omega_{n-i}(T)))  = (a_1, a_2-a_1, \ldots, n-a_k) = \comp(\hat{D}(T)).\]
Now suppose $ i \notin \hat{D}(T)$.  If $a_m>i$ for all $m$, then $\hat{D}(\mho_i(T)) = \emptyset$, $\hat{D}(\Omega_{n-i}(T) = \{ a_1-i, \ldots, a_k-i\}$, $\comp(\hat{D}(\mho_i(T))) = (i)$, and $\comp(\hat{D}(\Omega_{n-i}(T))) = (a_1-i, a_2-a_1, \ldots, n-a_k)$, so $\comp(\hat{D}(T)) = \comp(\hat{D}(\mho_i(T))) \odot \comp(\hat{D}(\Omega_{n-i}(T))) $.  Similarly, if $a_{m-1}<i<a_m$ for some $m$, then $\comp(\hat{D}(\mho_i(T))) = (a_1, a_2-a_1, \ldots, i-a_{m-1})$ while $\comp(\hat{D}(\Omega_{n-i}(T))) = (a_m - i, a_{m+1}-a_m, \ldots, n-a_k)$.  Thus, $\comp(\hat{D}(T)) = \comp(\hat{D}(\mho_i(T))) \odot \comp(\hat{D}(\Omega_{n-i}(T))) $. \end{proof}

We are now ready to prove Theorem \ref{thm:skew}.

\begin{proof}
Let $\alpha$ be a composition of size $n$.  We first expand $\Delta \mathcal{R}_\alpha$ in terms of the fundamental quasisymmetric functions, using \eqref{eq:coprod} and Lemma~\ref{lem:comp}. 
\begin{align}
\Delta \mathcal{R}_\alpha & = \sum_{T\in SYRT(\alpha)} \Delta F_{\comp(\hat{D}(T))} \notag \\
&= \sum_{T\in SYRT(\alpha)} \left( \sum_{\substack{\beta\cdot\gamma=\comp(\hat{D}(T))\text{ or}\\\beta\odot \gamma=\comp(\hat{D}(T))}} F_\beta \otimes F_\gamma  \right)\notag\\\notag\\
&=\sum_{T\in SYRT(\alpha)}\left( \sum_{i=0}^n F_{\comp(\hat{D}(\mho_i(T)))} \otimes F_{\comp(\hat{D}(\Omega_{n-i}(T)))}\right)~\label{eqn:Fsplit}.
\end{align}

Equating the right hand side of Definition~\ref{def:skew} with the right hand side of (\ref{eqn:Fsplit}) gives the following equality.

\begin{align*}~\label{eqn:funds}
\sum_{\delta} \mathcal{R}_\delta \otimes \mathcal{R}_{\alpha//\delta} & =\sum_{T\in SYRT(\alpha)}\left( \sum_{i=0}^n F_{\comp(\hat{D}(\mho_i(T)))} \otimes F_{\comp(\hat{D}(\Omega_{n-i}(T)))}\right),
\end{align*}
where the $\delta$ on the left hand side ranges over all compositions contained in $\alpha$.  Expanding $\mathcal{R}_\delta$ in terms of the fundamental quasisymmetric functions allows us to write

\begin{align*}
\sum_{\delta} \left(\sum_{U\in SYRT(\delta)}F_{\comp(\hat{D}(U))}\right)& \otimes \mathcal{R}_{\alpha//\delta}  =\\&\sum_{T\in SYRT(\alpha)}\left( \sum_{i=0}^n F_{\comp(\hat{D}(\mho_i(T)))} \otimes F_{\comp(\hat{D}(\Omega_{n-i}(T)))}\right).
\end{align*}

By Lemma~\ref{lem:glue}, for each $U \in SYRT(\delta)$, there exists a $T\in SYRT(\alpha)$ and $i$ with $0\leq i\leq n$ such that $U = \mho_i(T)$.  Note that $\Omega_{n-i}(T)$ will have shape $\alpha//\delta$.  Thus, we obtain
\begin{align*}
\mathcal{R}_{\alpha//\delta} & = \sum_{T\in SYRT(\alpha//\delta)} F_{\comp(\hat{D}(T))}.
\end{align*}
\end{proof}

\section{Properties of skew row-strict quasisymmetric Schur functions}\label{sec:LRrule}

Every skew Schur function can be decomposed into a positive sum of skew row-strict quasisymmetric Schur functions.  This decomposition is analogous to the decomposition of the Schur functions into quasisymmetric Schur functions.

\begin{theorem}
Each skew Schur function is a positive sum of skew row-strict quasisymmetric Schur functions given by $$s_{\lambda' / \mu'} = \sum_{\shape(\alpha // \beta) = \lambda / \mu} \mathcal{R}_{\alpha // \beta}.$$
\end{theorem}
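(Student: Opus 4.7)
My plan is to prove this theorem in parallel with the proof of Theorem~\ref{thm:SchurSum}, by extending the bijection $\rho$ of Lemma~\ref{lemma:bijection} to skew shapes and then invoking Theorem~\ref{thm:skew}. Transposition provides a weight-preserving bijection between $\SSYT(\lambda'/\mu')$ and row-strict semi-standard Young tableaux of shape $\lambda/\mu$, so $s_{\lambda'/\mu'} = \sum_T x^T$ as $T$ ranges over row-strict SSYT of skew shape $\lambda/\mu$. By Theorem~\ref{thm:skew}, the right-hand side of the stated identity equals $\sum_{(\alpha,\beta)}\sum_{F\in\SSYRT(\alpha//\beta)} x^F$, with the outer sum over pairs with $\shape(\alpha//\beta)=\lambda/\mu$, so it suffices to produce a weight-preserving bijection between these two sets of fillings.

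To construct this bijection $\rho^{\mathrm{sk}}$, given a row-strict SSYT $T$ of shape $\lambda/\mu$, I would fill each cell of the inner shape $\mu$ with a placeholder $0$ that is smaller than every positive entry, producing a filling $\widetilde T$ of the straight shape $\lambda$, and then apply the column-by-column procedure of Lemma~\ref{lemma:bijection} to $\widetilde T$, with the zero entries inserted first in each column (using the natural analogue of the left-neighbor rule, namely each zero is placed in the highest available row whose immediate left neighbor is also zero) and the positive entries inserted afterwards exactly as in Lemma~\ref{lemma:bijection}. The output has shape some composition $\alpha$ with $\shape(\alpha)=\lambda$, and removing the zero cells leaves an SSYRT of some skew shape $\alpha//\beta$, where $\beta$ records the rows and columns of the zeros.

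The verification splits into four parts: well-definedness of the insertion procedure; that the output $F$ satisfies the three axioms of Definition~\ref{SSYRTdef} (the triple condition works out precisely because cells of $\beta$ are already treated as $0$ there, so the argument of Lemma~\ref{lemma:bijection} carries over verbatim); that $\shape(\beta)=\mu$ and $\beta\le_{\YL_c}\alpha$, ensuring that $\alpha//\beta$ is a valid skew composition diagram; and that the map is reversible, with inverse obtained by listing each column of $F\cup\beta$ in weakly increasing order from bottom to top and re-pasting into $\lambda/\mu$. The main obstacle I anticipate is the $\beta\le_{\YL_c}\alpha$ condition: I would verify by induction on columns that each zero-placement extends the current inner composition by exactly a $\YL_c$-cover, either appending a new part of size one or increasing the rightmost occurrence of some existing part size, with the compatibility of this extension with the $\YL_c$ cover relations being the key technical point. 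Once $\rho^{\mathrm{sk}}$ is established, combining with Theorem~\ref{thm:skew} yields the claimed decomposition.
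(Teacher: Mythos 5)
Your proposal is correct and matches the paper's own argument: the paper's bijection $h$ is precisely your $\rho^{\mathrm{sk}}$, treating the cells of the inner shape as zeros, conjugating to a row-strict filling of $\lambda/\mu$, and then inserting each column's entries (zeros first, each placed to the right of a zero) by the column-by-column rule of Lemma~\ref{lemma:bijection}. If anything, you are more explicit than the paper about checking that the zero cells assemble into a composition $\beta$ with $\beta\le_{\YL_c}\alpha$, a point the paper leaves implicit.
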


\begin{proof}
We exhibit a weight-preserving bijection, $h$, between the set of all column-strict tableaux of shape $\lambda' / \mu'$ and the set of all row-strict composition tableaux whose shape rearranges to $\lambda/\mu$.  See Fig.~\ref{fig:skewschur} for an example of the bijection.

Given a column-strict tableau $S$ of shape $\lambda' / \mu'$, consider the entries contained in $\mu'$ to be zeros, represented by stars in the diagram.  Take the conjugate of $S$ to produce a row-strict tableau $r(S)$ of shape $\lambda/\mu$.  Next map the entries in the leftmost column of $r(S)$ to the leftmost column of $h(S)=T$ by placing them in weakly decreasing order.  Map each set of column entries from $r(S)$ into the corresponding column of $h(S)$ by the following process:

\begin{enumerate}
\item Begin with the smallest entry, $a_1$ in the set.
\item Map $a_1$ to the highest available cell that is immediately to the right of an entry strictly smaller than $a_1$.  If $a_1=0$, map it to the highest available cell immediately to the right of a zero.
\item Repeat with the next smallest entry, noting that a cell is {\it available} if no entry has already been placed in this cell.
\item Continue until all entries from this column have been placed, and then repeat with each of the remaining columns.
\end{enumerate} 

We must show that this process produces a row-strict composition tableau.  The first two conditions are satisfied by construction, so we must check the third condition.  Consider two cells $T(j,k)$ and $T(i,k+1)$ such that $j>i$, $(i,k+1) \in \alpha // \beta$, and $T(j,k) < T(i,k+1)$.  Let $T(j,k)=b$, $T(i,k+1)=a$, and $T(j,k+1)=c$.  Then the cells are situated as shown, where $a > b$:
$$\tableau{b & c \\ \\ & a}.$$
We must prove that $T(j,k+1) \le T(i,k+1)$, or in other words, that $c \le a$.  Assume, to get a contradiction, that $c > a$.  Then $a$ would be inserted into its column before $c$.  But then the cell immediately to the right of $b$ would have been available, and therefore $a$ would have been placed in that cell rather than farther down in the column.  Therefore this configuration would not have occurred.

The inverse map, $h^{-1}$ is given by arranging the entries from each column of a row-strict composition tableau $U$ so that they are weakly increasing from top to bottom.  We must prove that the row entries are strictly increasing.  Argue by contradiction.  Assume there exists a row whose entries are not strictly increasing, and choose the leftmost column, $C$, such that an entry in $C$ is smaller than the entry immediately to its left and the highest occurrence (call it row $R$) within this column.  Call this entry $x$.  Then column $C-1$ contains only $R-1$ entries which are less than $x$ while column $C$ contains $R$ entries less than or equal to $x$.  This contradicts the fact that the rows of $U$ are strictly increasing.
\end{proof}

\begin{figure}
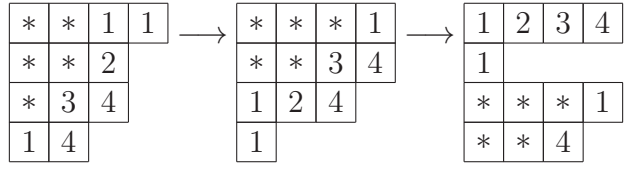

$$\tableau{* & * & 1 & 1 \\ * & * & 2 \\ * & 3 & 4 \\ 1 & 4} \longrightarrow \tableau{ * & * & * & 1 \\ * & * & 3 & 4 \\ 1  & 2 & 4 \\ 1} \longrightarrow \tableau{1 & 2 & 3 & 4 \\ 1 \\ * & * & * & 1 \\ * & * & 4}$$
\caption{The bijection $h$ from SSYT to SSRYT.}\label{fig:skewschur}
\end{figure}

\subsection{A multiplication rule}

In this section we give a Littlewood-Richardson style rule for multiplying $\mathcal{R}_\alpha \cdot s_\lambda$.  We show that this rule is equivalent to the rule given by Ferreira in \cite{ferreira} for $\mathcal{RS}_\alpha \cdot s_\lambda$.  We begin by reviewing the RSK algorithms for both SSRRT and SSYRT, and then present Ferreira's Littlewood-Richardson rule for $\mathcal{RS}_\alpha \cdot s_\lambda$, followed by the Littlewood-Richardson rule for $\mathcal{R}_\alpha \cdot s_\lambda$.

\begin{algorithm} \label{alg:insertssrrt}~\cite{MasRem10}
Given $T\in \SSRRT(\alpha)$, we insert $x$ into $T$, denoted $T\xleftarrow{R} x$, in the following way:
\begin{enumerate}
\item For each cell $T(i,\alpha_i)$, add a cell $T(i,\alpha_i+1) = 0$.
\item Read down each column of ${T}$, starting from the rightmost column and moving left. This is the {\em reading order} for ${T}$.

\item Find the first entry ${T}(i,k+1)$ in reading order such that ${T}(i,k+1)\leq x$ and ${T}(i,k)>x$ and $k \neq 1$.  
\begin{enumerate}
\item If $T(i,k+1)=0$, set $T(i,k+1) = x$ and the algorithm terminates.
\item If $T(i,k+1)=y$, set $T(i,k+1)=x$ and continue scanning cells using $x=y$ starting at cell $(i-1,k+1)$.  Again, we say that ``$x$ bumps $y$.''
\item If there is no such entry, then insert $x$ into the first column, creating a new row of length 1, in between the unique pair ${T}(i,1)$ and ${T}(i+1,1)$ such that ${T}(i,1)>x\geq {T}(i+1,1)$ and terminate the insertion.  If $x \geq {T}(i,1)$ for all $i$, insert $x$ at the bottom of the first column, creating a new row of length 1, and terminate the insertion.  
\end{enumerate} 
\item Continue until the insertion terminates.
\end{enumerate}
\end{algorithm}

\begin{algorithm} \label{alg:insertssyrt}
Given $F\in \SSYRT(\alpha)$, we insert $x$ into $F$, denoted $F\leftarrow x$, in the following way:
\begin{enumerate}
\item For each cell $F(i,\alpha_i)$, add a cell $F(i,\alpha_i+1) = \infty$.

\item Read down each column of ${F}$, starting from the rightmost column and moving left. This is the {\em reading order} for ${F}$.

\item Find the first entry ${F}(i,k+1)$ in reading order such that ${F}(i,k+1)\geq x$ and ${F}(i,k)<x$ and $k \neq 1$. 
\begin{enumerate}
\item If $F(i,k+1)=\infty$, set $F(i,k+1):=x$ and the algorithm terminates.
\item If $F(i,k+1)=y$, set $F(i,k+1):= x$ and continue scanning cells using $x:=y$ starting at cell $(i-1,k+1)$.   We say that ``$x$ bumps $y$.''
\item If there is no such entry, then insert $x$ into the first column, creating a new row of length 1, in between the unique pair ${F}(i,1)$ and ${F}(i+1,1)$ such that ${F}(i,1)<x\leq {F}(i+1,1)$ and terminate the insertion.  If $x\leq {F}(i,1)$ for all $i$, insert $x$ at the bottom of the first column, creating a new row of length 1, and terminate the insertion.  
\end{enumerate}


\item Continue until the insertion terminates.
\end{enumerate}
\end{algorithm}

In Fig.~\ref{fig:ssyrtbump} the insertion path is highlighted in bold when 3 is inserted into $F$, denoted $F\leftarrow 3$.

\begin{figure}
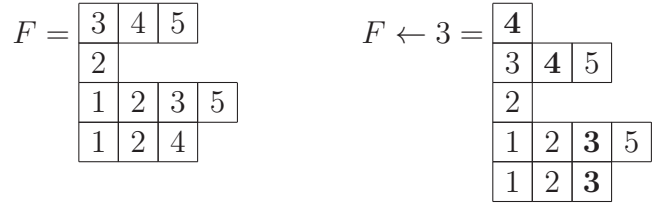

\[F=\tableau{3&4&5\\2\\1&2&3&5\\1&2&4}\qquad\qquad F\leftarrow 3 = \tableau{\boldsymbol{4}\\3&\boldsymbol{4} & 5\\ 2\\ 1&2&\boldsymbol{3}&5\\1&2&\boldsymbol{3}}\]
\caption{The result of Algorithm~\ref{alg:insertssyrt}, with insertion path in bold.}\label{fig:ssyrtbump}
\end{figure}

A word $w= w_1w_2\ldots w_n$ with $\max\{w_i\} = m$ is a {\em lattice word} if for every prefix of $w$, there are at least as many $i$'s as $(i+1)$'s for each $1\leq i <m$.  Note that such a word must start with a 1.  A {\em reverse lattice word} is a word $v=v_1v_2\ldots v_n$ with $\max\{v_i\}= m$ with the property that for every prefix of $v$, for all $i \le m$, there are at least as many $i$'s as ($i-1$)'s.  A reverse lattice word is called {\em regular} if it contains at least one 1.  Given a (skew) SSYRT $T$, the {\em column word} of $T$, denoted $\col(T)$, is the word obtained by reading each column from bottom to top, starting with the leftmost column and moving right.  The column reading word of a (skew) SSRRT is obtained by this same procedure.  Note that this is different from the standard reading order defined previously.

Given a weak composition $\gamma$, let $\gamma^+$ denote the composition obtained from $\gamma$ by removing all parts of size 0.  Then, if $\alpha \subseteq \beta$, denote by $\beta/\alpha$ any skew shape $\beta//\gamma$ where $\gamma$ is a weak composition such that $\gamma^+ =\alpha$.  

Consider an arbitrary filling $F$ of shape $\beta/\gamma$.  Rather than a single triple rule, we will require two types of triples, A and B, as seen in Fig.~\ref{fig:ABtriples}.  A {\em type A triple} is a set of entries $F(i,k-1)=a, F(i, k)=b,$ and $F(j,k)=c$ where row $i$ occurs above row $j$ and $\beta_i\geq \beta_j$.  A {\em type B triple} is a set of entries $F(i, k-1)=c, F(j,k-1)=a,$ and $F(j,k)=b$ where row $i$ occurs above row $j$ and $\beta_i<\beta_j$.  Note that in English convention, $i<j$, while in French convention $i>j$, but the triples will have the same shape in either case. 
\begin{figure}
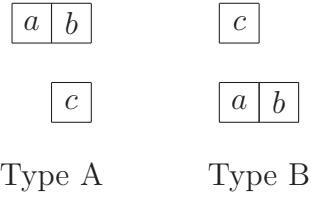

\[\begin{array}{ccc}
\tableau{a&b\\\\&c} &\quad & \tableau{c\\\\a&b}\\\\
\text{ Type A } && \text{ Type B }\end{array}\]
\caption{Type A triples require that $\beta_i\geq \beta_j$ where row $i$ is above row $j$, and Type B triples require $\beta_i<\beta_j$ where row $i$ is above row $j$.}\label{fig:ABtriples}
\end{figure}

In \cite{ferreira} a {\em reverse Littlewood-Richardson skew row-strict composition tableau} is defined to be a filling $L$ of a diagram of a skew shape $\beta/\gamma$ where $\beta$ and $\alpha$ are strong compositions and $\gamma$ is a weak composition with $\gamma^+ = \alpha$, and the diagram also has a column $0$ filled with $\infty$'s, with the following properties:
\begin{enumerate}
\item Each row strictly decreases when read left to right.
\item The column reading word is a regular reverse lattice word. 
\item Set $L(i,j) = \infty$ for all $(i,j) \in \gamma$. Each triple (type A or B) must satisfy $c\leq b<a$ or $b<a\leq c$, including triples containing cells in column 0.  See Fig.~\ref{fig:ABtriples} to see the types of triples.
\end{enumerate}

\begin{theorem}[\cite{ferreira}] Let $s_\lambda$ be the Schur function indexed by the partition $\lambda$, and let $\alpha$ be a composition.  Then 
\[\mathcal{RS}_\alpha \cdot s_\lambda = \sum_{\beta} C_{\alpha,\lambda}^\beta \mathcal{RS}_\beta\] 
where $C_{\alpha,\lambda}^\beta$ is the number of reverse Littlewood-Richardson skew SSYRT of shape $\beta/\alpha$ and content $rev(\lambda)$.
\end{theorem}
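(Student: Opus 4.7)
My plan follows the Knuth--RSK template for Littlewood--Richardson theorems, adapted to the insertion on SSRRT in Algorithm~\ref{alg:insertssrrt}. The goal is a weight-preserving bijection
\[
\SSRRT(\alpha)\times\SSYT(\lambda)\;\longleftrightarrow\;\bigsqcup_{\beta}\SSRRT(\beta)\times\bigl\{\text{reverse LR skew SSRRT of shape }\beta/\alpha\text{ and content }rev(\lambda)\bigr\}.
\]
Given $(T,S)$ on the left, I would read the entries of $S$ in column reading order (bottom to top within each column, proceeding from right to left across columns) and insert them one at a time into $T$ via the operation $\xleftarrow{R}$. The output SSRRT is $U\in\SSRRT(\beta)$; the recording filling $L$ places in each newly created cell of $\beta/\alpha$ the \emph{row index} of the cell of $S$ whose entry triggered its creation, with column $0$ of the skew shape padded by $\infty$. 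Traversing the rows of $S$ from bottom to top makes row $i$ contribute $\lambda_i$ cells labelled $i$, so $L$ has content $rev(\lambda)$.

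First I would verify that $U$ is indeed an SSRRT. This is essentially a single application of the bumping analysis already carried out in~\cite{MasRem10} iterated through the insertion sequence. Next I would construct the inverse map: starting from $(U,L)$, extract cells of $L$ in reverse creation order and perform reverse bumping on $U$, recovering $S$ row by row and leaving $T$ behind. Standard RSK-style uniqueness arguments then show these maps are mutually inverse.

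The substantive task is to show that the recording $L$ is precisely a reverse Littlewood--Richardson skew SSRRT. The row-strict decreasing condition on $L$ follows from the observation that entries inserted later from higher rows of $S$ bump into columns weakly to the right of earlier, larger entries from the same column of $S$, forcing recording labels to decrease along each row. The regular reverse lattice word condition on $\col(L)$ is the transfer through bumping of the classical lattice property of the column reading word of an SSYT: one shows by induction on the number of insertions that this property is preserved, with the ``regular'' requirement (some $1$ appears) arising because the top row of $S$ is the last to be inserted.

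The principal obstacle is verifying the two triple conditions (types A and B), which encode interactions between the insertion path and the neighbouring rows $i,j$ having respective part sizes $\beta_i\ge\beta_j$ or $\beta_i<\beta_j$. For a type A triple, the SSRRT triple condition applied to $U$ constrains the labels $a=L(i,k-1)$, $b=L(i,k)$, $c=L(j,k)$ via the order in which the corresponding cells were created; for a type B triple, one argues symmetrically, using the fact that bumping past a longer row $j$ from a shorter row $i$ above forces the new cell in row $j$ to carry a recording label compatible with the existing label in row $i$. The analysis is delicate because the two triple shapes reflect fundamentally different ways the shape $\beta$ can grow, and one must check both that the conditions are preserved by insertion and that every reverse LR skew SSRRT arises from some $(T,S)$. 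Once both directions are established, comparing generating functions on the two sides of the bijection yields the identity $\mathcal{RS}_\alpha\cdot s_\lambda=\sum_\beta C^\beta_{\alpha,\lambda}\mathcal{RS}_\beta$.
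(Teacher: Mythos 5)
This theorem is not proved in the paper at all: it is imported verbatim from \cite{ferreira}, and the authors use it (together with the reversing bijection $f$ and Lemma~\ref{lem:LRbij}) only to deduce their own multiplication rule, Theorem~\ref{thm:LRcomp}. So there is no in-paper argument to compare against; the relevant comparison is with Ferreira's proof, which does proceed by essentially the insertion scheme you describe --- an RSK-type correspondence built on Algorithm~\ref{alg:insertssrrt}, modelled on the Haglund--Luoto--Mason--van Willigenburg proof of the refined Littlewood--Richardson rule for quasisymmetric Schur functions. Your outline therefore identifies the right strategy and the right objects.

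As a proof, however, it has two problems. First, a concrete bookkeeping error: if the recording label of each new cell is the row index $i$ of the cell of $S$ that produced it, then row $i$ contributes $\lambda_i$ cells labelled $i$, so $L$ has content $\lambda$, not $rev(\lambda)$; moreover the most frequent label is then the smallest one, so $\col(L)$ would be a lattice word rather than a \emph{reverse} lattice word, contradicting the definition of a reverse Littlewood--Richardson filling given just before the theorem. You need to record the complemented label $\ell(\lambda)-i+1$ (equivalently, arrange the insertion so that the longest row of $S$ deposits the largest label). Second, and more seriously, everything that makes the theorem non-trivial is deferred rather than carried out: that iterated insertion produces a valid SSRRT, that the recording filling satisfies both the type A and type B triple conditions (including the column-$0$ padding by $\infty$), that the reverse lattice property survives bumping, and that the correspondence is invertible onto exactly the set of reverse Littlewood--Richardson fillings are all asserted with phrases such as ``standard RSK-style uniqueness arguments'' and ``the analysis is delicate.'' Those verifications are the substance of the cited result; without them the proposal is a plan, not a proof.
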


\begin{definition}\label{def:LRcomp} Let $\alpha$ and $\beta$ be compositions with $\alpha \subseteq \beta$.  Let $\gamma$ be a weak composition satisfying $\gamma^+ = \alpha$.  A {\em Littlewood-Richardson skew row-strict composition tableau} $S$ of shape $\beta/\alpha$ is a filling of a diagram of skew shape $\beta//\gamma$, with column 0 filled with 0's, such that 
\begin{enumerate}
\item Each row strictly increases when read left to right.
\item The column reading word of $S$ is a lattice word.
\item Let $S(i, j) = 0$ for all $(i,j) \in \gamma$.  Each triple (type A or B) must satisfy $a<b\leq c$ or $c\leq a <b$, including triples containing cells in column $0$.
\end{enumerate}
\end{definition}

We use the weight-reversing, shape-reversing bijection $f:\SSRRT\rightarrow $\\$\SSYRT$ defined in \S \ref{sec:comptab} to prove the following analogue of the Littlewood-Richardson rule. 
 
\begin{theorem}\label{thm:LRcomp} Let $\lambda$ be a partition and $\alpha$ a composition.  Then 
\[\mathcal{R}_\alpha\cdot s_\lambda = \sum_\beta D_{\alpha,\lambda}^\beta \mathcal{R}_\beta\] 
where $D_{\alpha,\lambda}^\beta$ is the number of Littlewood-Richardson skew SSYRT of shape $\beta//\alpha$ and content $\lambda$.
\end{theorem}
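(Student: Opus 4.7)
The plan is to reduce Theorem~\ref{thm:LRcomp} to Ferreira's Littlewood-Richardson rule for $\mathcal{RS}_\alpha\cdot s_\lambda$ via the weight-reversing, shape-reversing bijection $f$ from \S\ref{sec:comptab}, extended to skew shapes. Given any filling $T$ of $rev(\beta)/rev(\alpha)$ with entries in $\{1,\ldots,|\lambda|\}$, I would reinterpret its diagram in French convention (which reverses the row order) and replace each entry $i$ by $|\lambda|-i+1$, obtaining a filling of $\beta//\gamma$ for the weak composition $\gamma$ with $\gamma^+=\alpha$ coming from $rev(\alpha)$. This extended $f$ is weight-reversing, sends row-decreasing rows to row-increasing rows, and converts Ferreira's convention of a column $0$ filled with $\infty$'s into a column $0$ filled with $0$'s as in Definition~\ref{def:LRcomp}.

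The main combinatorial step is to verify that $f$ restricts to a bijection between reverse Littlewood-Richardson skew SSRRTs of shape $rev(\beta)/rev(\alpha)$ and content $rev(\lambda)$ and Littlewood-Richardson skew SSYRTs of shape $\beta//\alpha$ and content $\lambda$. Two conditions must be checked: (i) the type A and type B triple inequalities $c\leq b<a$ or $b<a\leq c$ in Ferreira's definition transform to $a<b\leq c$ or $c\leq a<b$ under $f$; this follows by case analysis because the row reversal swaps the ``above/below'' role in the triple classification while $i\mapsto |\lambda|-i+1$ swaps each $\leq,<$ consistently. (ii) The column word of $f(T)$ is a lattice word if and only if the column word of $T$ is a regular reverse lattice word; this is true because reading the columns of $T$ bottom-to-top, left-to-right in English convention yields the same sequence of positions as reading the columns of $f(T)$ top-to-bottom, left-to-right in French convention, so the two words differ only by the entry relabeling $i\mapsto |\lambda|-i+1$, and a prefix-by-prefix comparison converts the reverse-lattice inequality for entries $\{i,i+1\}$ into the lattice inequality for entries $\{|\lambda|-i,|\lambda|-i+1\}$, while the regularity condition matches the requirement that a lattice word begin with $1$.

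Granted the bijection, Ferreira's theorem applied to $rev(\alpha)$ gives, in $m$ variables,
\[ \mathcal{RS}_{rev(\alpha)}(x_1,\ldots,x_m)\cdot s_\lambda(x_1,\ldots,x_m) = \sum_\beta C_{rev(\alpha),\lambda}^{rev(\beta)}\,\mathcal{RS}_{rev(\beta)}(x_1,\ldots,x_m), \]
where $C_{rev(\alpha),\lambda}^{rev(\beta)} = D_{\alpha,\lambda}^\beta$ by the bijection. Applying the variable reversal $x_i\mapsto x_{m-i+1}$, which by the non-skew case of $f$ (Lemma~\ref{lemma:bijection} together with the discussion of $f$ following Definition~\ref{SSYRTdef}) sends each $\mathcal{RS}_{rev(\gamma)}(x_1,\ldots,x_m)$ to $\mathcal{R}_\gamma(x_1,\ldots,x_m)$ and fixes the symmetric function $s_\lambda$, converts this identity into the required $\mathcal{R}_\alpha\cdot s_\lambda = \sum_\beta D_{\alpha,\lambda}^\beta\,\mathcal{R}_\beta$; letting $m\to\infty$ yields the statement in $QSym$.

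The step I expect to be most delicate is the triple-condition verification for type B triples, where the two rows involved have unequal lengths: one must confirm that a type A triple in $T$ corresponds to either a type A or a type B triple in $f(T)$ (depending on the shape comparison, which is reversed by $f$), and that in each case the transformed inequalities are exactly the ones in Definition~\ref{def:LRcomp}. This is the only place where the interaction between the shape reversal and the entry reversal must be tracked carefully, but it is an essentially routine case analysis once the dictionary between the two conventions is fixed.
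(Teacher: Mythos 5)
Your proposal follows essentially the same route as the paper: the paper proves exactly your key combinatorial step as Lemma~\ref{lem:LRbij} (that $f$ carries reverse Littlewood-Richardson skew SSRRTs of shape $\beta/\alpha$ and content $rev(\lambda)$ to Littlewood-Richardson skew SSYRTs of the reversed shape and content $\lambda$, checking the lattice-word and type A/B triple conditions just as you outline), and then deduces the theorem from Ferreira's rule via $D_{\alpha,\lambda}^{\beta}=C_{rev(\alpha),\lambda}^{rev(\beta)}$. Your treatment is in fact slightly more careful than the paper's in making the variable reversal $x_i\mapsto x_{m-i+1}$ explicit (the paper writes $\mathcal{R}_\alpha\cdot s_\lambda=\mathcal{RS}_{rev(\alpha)}\cdot s_\lambda$ without comment); the only blemish is that the entry complementation should be $i\mapsto \ell(\lambda)-i+1$ rather than $|\lambda|-i+1$, a notational slip that does not affect the argument.
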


We need the following lemmas in order to prove Theorem~\ref{thm:LRcomp}.

\begin{lemma}\label{lem:commute} Given $T$, an SSRRT of shape $\alpha\vDash n$, $x$ a positive integer and $x^* = n-x+1$, $ f(T)\leftarrow x^*  = f(T\xleftarrow{R}x)$.
\end{lemma}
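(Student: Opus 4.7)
The plan is to show that $f$ intertwines the two insertion algorithms move-for-move, so that the output SSYRT produced by $f(T) \leftarrow x^*$ agrees cell-by-cell with $f$ applied to the SSRRT produced by $T \xleftarrow{R} x$. The key observation is structural: $f$ preserves the visual layout of the diagram (only the row-labeling convention flips from English to French) while replacing each entry $y$ by $y^* := n-y+1$. Consequently the two reading orders --- columns scanned right-to-left, within each column cells scanned top-to-bottom as drawn --- visit corresponding cells in lockstep.

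With this identification in hand, I would first verify that the bumping criteria coincide. In Algorithm~\ref{alg:insertssrrt} one seeks the first cell $(i,k+1)$ with $k\neq 1$ satisfying $T(i,k+1)\le x$ and $T(i,k)>x$; since $y\le x \iff y^* \ge x^*$ and $y>x \iff y^*<x^*$, this is exactly the SSYRT criterion $f(T)(i',k+1)\ge x^*$ and $f(T)(i',k)<x^*$ of Algorithm~\ref{alg:insertssyrt} at the same visual cell (where $i'$ is the French row index of that cell). When $x$ bumps $y=T(i,k+1)$ and the scan resumes at the cell immediately above with the new value $y$, the mirror step under $f$ has $x^*$ bumping $y^*=f(T)(i',k+1)$ and the scan resuming at the very same cell (which is immediately above in the picture regardless of convention) with new value $(y)^*=y^*$. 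Thus the sequence of intermediate bumps in $T\xleftarrow{R} x$ matches, cell-for-cell and value-for-value under $y\mapsto y^*$, the sequence in $f(T)\leftarrow x^*$.

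The termination cases are handled by the same reversal. If the SSRRT scan terminates by placing $x$ in a $0$-cell at $(i,\alpha_i+1)$, the corresponding cell of $f(T)$ is the $\infty$-cell at the same visual position, and placing $x^*$ there produces $f$ applied to the SSRRT result. If instead the SSRRT scan creates a new leftmost-column row of length $1$ between $T(i,1)$ and $T(i+1,1)$ as specified in Algorithm~\ref{alg:insertssrrt}, the complementary inequalities on $f(T)$ locate the same visual slot, where Algorithm~\ref{alg:insertssyrt} places $x^*$ as a new row of length $1$. In both cases the result is precisely $f$ of the SSRRT output.

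A straightforward induction on the length of the bumping path then yields $f(T\xleftarrow{R} x) = f(T)\leftarrow x^*$. The main technical obstacle is simply the bookkeeping needed to confirm that each intermediate partially-modified filling remains a valid SSRRT (respectively SSYRT) whose $f$-image is its partner, so that the induction step is legitimate; this reduces to the symmetry of the triple conditions under the complementation $y\mapsto n-y+1$, which is already implicit in the original definition of $f$ in Section~\ref{sec:comptab}.
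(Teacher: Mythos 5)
Your proposal is correct and takes essentially the same approach as the paper's own proof: both identify corresponding cells of $T$ and $f(T)$, observe that the complementation $y\mapsto n-y+1$ reverses all the inequalities in the scanning criteria of Algorithms~\ref{alg:insertssrrt} and~\ref{alg:insertssyrt}, and induct along the bumping path to conclude that the two insertion paths coincide cell-for-cell. Your extra worry about intermediate fillings remaining valid tableaux is unnecessary (the scanning rules apply to arbitrary fillings mid-insertion), and the paper likewise omits it.
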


\begin{proof} Let $T$ be a  SSRRT of shape $\alpha$.  Then $f(T)$ is a SSYRT of shape $rev(\alpha)$.  We show that if, during the insertion process $T \leftarrow x$, the entry in cell $a$ of $T$ is bumped by an entry $y$, then the entry in cell $a$ of $f(T)$ is bumped by $y*=n-y+1$.  First consider the initial entry, $x$, inserted into $T$ and say this entry bumps the entry in cell $a$ of $T$.  Let $\hat{a}$ denote the cell immediately to the left of $a$.  Then if $T(a)$ is bumped, $T(a)\leq x$ and $T(\hat{a})>x$.  In the reading order, all cells $b$ that occur prior to $a$ must have $T(b)<x$ with $T(\hat{b})\leq x$.  In $f(T)$, for cell $b$ earlier in the reading order than $a$, note that $f(T)(b)>x^*$ and $f(T)(\hat{b}) \geq x^*$, thus there is no location prior to cell $a$ for $x^*$ to be inserted.  However, $f(T)(a)\geq x^*$ and $f(T)(\hat{a})<x^*$, so $x^*$ will bump the label in cell $a$. 

Now assume that the statement is true for all entries in the bumping process up to an entry $y$.  If $y$ bumps the entry in cell $b$, then the cell $\hat{b}$ immediately to the left of $b$ must contain an entry strictly greater than $b$.  So in $f(T)$, the entry in cell $\hat{b}$ must be strictly less than the entry in cell $b$.  So $y^*$ will bump the entry in cell $b$ of $f(T)$ if $y^*$ does not bump an entry before this cell.  But if $y^*$ bumps an entry in $f(T)$ before $a$ in reading order, then $y$ would bump the entry in that cell in $T$.  This cannot happen since $y$ bumps the entry in cell $b$.  Therefore the bumps are the same and hence the resulting diagram $f(T) \leftarrow x^*$ is the diagram obtained by $f(T \leftarrow x)$.
\end{proof}

\begin{lemma}\label{lem:LRbij} If $T$ is a reverse Littlewood-Richardson skew row-strict composition tableau of shape $\beta/\alpha$ and content $rev(\lambda)$, then $f(T)$ is a Littlewood-Richardson skew row-strict composition tableau of shape $rev(\beta)/rev(\alpha)$ and content $\lambda$.\end{lemma}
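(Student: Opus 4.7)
The plan is to verify, condition by condition, that the map $f:\SSRRT\rightarrow\SSYRT$ from Section~\ref{sec:comptab} (extended to skew shapes by applying the same row-reversal and entry substitution $i\mapsto m-i+1$, with $m=\ell(\lambda)$) sends each axiom in Ferreira's definition of a reverse Littlewood--Richardson skew row-strict composition tableau to the corresponding axiom of Definition~\ref{def:LRcomp}. If $T$ has shape $\beta/\gamma$ with $\gamma^+=\alpha$, then $f(T)$ has shape $rev(\beta)//rev(\gamma)$, and since $rev(\gamma)^+=rev(\alpha)$ this is a skew shape of the required form $rev(\beta)/rev(\alpha)$. The substitution $i\mapsto m-i+1$ transforms the content $rev(\lambda)=(\lambda_m,\ldots,\lambda_1)$ of $T$ into the content $\lambda$ of $f(T)$.

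Row strictness and the triple conditions are then almost immediate, because $i\mapsto m-i+1$ swaps every strict inequality with a strict reverse inequality and every weak one with a weak reverse inequality. In particular, strictly decreasing rows become strictly increasing rows. Since $f$ preserves the visual layout of the diagram (only the entries change), a type~A (resp.\ type~B) triple in $T$ remains a type~A (resp.\ type~B) triple in $f(T)$, and Ferreira's disjunction $c\leq b<a$ or $b<a\leq c$ becomes $a^{*}<b^{*}\leq c^{*}$ or $c^{*}\leq a^{*}<b^{*}$, which is exactly the disjunction in Definition~\ref{def:LRcomp}. The special column~$0$ conventions also match up: the $\infty$'s in Ferreira's column~$0$ correspond under the inequality reversal to the $0$'s in our column~$0$, so triple conditions involving column~$0$ transform correctly as well.

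The step requiring the most care, and which I expect to be the main obstacle, is identifying the column reading word of $f(T)$ with the letter-substituted column reading word of $T$. The subtlety is that the SSRRT uses the English convention while the SSYRT uses the French convention, and the shape is reversed from $\beta$ to $rev(\beta)$; nevertheless, the two diagrams are visually identical, and the procedure ``each column from bottom to top, leftmost column first'' visits cells in the same visual order in both cases. Once this is confirmed, the column word of $f(T)$ is obtained from that of $T$ by $w_j\mapsto m-w_j+1$. A regular reverse lattice word $w$ begins with $m$ (forcing $w^{*}_1=1$) and has $\#(i)\geq\#(i-1)$ in every prefix for every $i\leq m$, so $w^{*}$ has $\#_{w^{*}}(i)=\#_w(m-i+1)\geq\#_w(m-i)=\#_{w^{*}}(i+1)$ in every prefix, showing that $w^{*}$ is a lattice word. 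Together with the content and shape calculations above, this gives all three conditions of Definition~\ref{def:LRcomp} for $f(T)$ and hence the lemma.
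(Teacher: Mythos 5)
Your proposal is correct and follows essentially the same route as the paper's proof: check shape and content under the reversal $i\mapsto m-i+1$, observe that row-strictness and the type A/B triple disjunctions transform into one another because all inequalities reverse, and convert the regular reverse lattice word condition on $\col(T)$ into the lattice word condition on $\col(f(T))$ via $\#_{w^*}(i)=\#_w(m-i+1)\geq\#_w(m-i)=\#_{w^*}(i+1)$ on prefixes. If anything you are slightly more careful than the paper on two points it glosses over, namely the matching of the column-$0$ conventions ($\infty$'s versus $0$'s) and the fact that the English/French convention switch leaves the visual reading order of the columns unchanged.
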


\begin{proof}
Let $T$ be a reverse Littlewood-Richardson skew row-strict composition tableau of shape $\beta/\alpha$ and content $rev(\lambda)$.  Then $f(T)$ has shape $rev(\beta)/rev(\alpha)$.  In $T$, the number of entries $i$ is given by $\lambda_{n-i+1}$ since the content of $T$ is $rev(\lambda)$.  Thus, in $f(T)$, the number of entries $i$ is given by $\lambda_i$, so the content of $f(T)$ is $\lambda$.  Conditions (1) and (3) for reverse Littlewood-Richardson skew row-strict composition tableaux are true since we know that $f(T)$ is an SSYRT.  Therefore we must prove Condition (2); that is that the column reading word of $f(T)$ is a lattice word.

Let $\col(T) = w_1w_2\ldots w_k$.  Then $\col(f(T)) = v_1v_2\ldots v_k$ where $v_i = n-w_i+1$.  Let $v_1\ldots, v_j$ be a prefix of $\col(f(T))$ and let $x_i$ be the number of $i$'s in the prefix.  Then $n-i+1$ appears $x_i$ times in the corresponding prefix $w_1\ldots w_j$ in $\col(T)$, and then, since $\col(T)$ is a reverse regular lattice word, $n-i$ appears at most $x_i$ times in the prefix $w_1\ldots w_j$.  Thus, there are at most $x_i$ appearances of $i+1$ in $v_1\ldots v_j$.  Therefore $\col(f(T))$ is a lattice word.

Finally, we consider the type A and B triples.  Consider a type A or B triple in $T$ with entries $a, b, c$ as arranged in Fig.~\ref{fig:ABtriples}.  If $c\leq b<a$, then the corresponding triple in $f(T)$ has $n-c+1\geq n-b+1>n-a+1$, and thus satisfies the triple condition in Definition~\ref{def:LRcomp}.  Similarly, if $b<a\leq c$ in $T$, then $n-b+1>n-a+1\geq n-c+1$ in $f(T)$ and the triple condition is satisfied.\end{proof}

We are now ready to prove Theorem~\ref{thm:LRcomp}.  
\begin{proof}  Let $\lambda$ be a partition and $\alpha$ a composition.  Then $D_{\alpha,\lambda}^\beta = C_{rev(\alpha),\lambda}^{rev(\beta)}$ for each $\beta$ with $\alpha \subseteq \beta$ by Lemma~\ref{lem:LRbij}.  Thus,

\begin{align*}
\mathcal{R}_\alpha\cdot s_\lambda &= \mathcal{RS}_{rev(\alpha)} \cdot s_\lambda\\
&=\sum_{rev(\beta)} C^{rev(\beta)}_{rev(\alpha),\lambda} \mathcal{RS}_{rev(\alpha)}\\
&=\sum_\beta D^{\beta}_{\alpha,\lambda} \mathcal{R}_\beta.
\end{align*}\end{proof}

By applying the reversing bijection $f$ and the bijection in \cite{ferreira}, we can describe a bijection from pairs $(U,S)$ where  $U$ is an SSYRT of shape $\alpha$ and $S$ is a row-strict semi-standard Young tableau of shape $\lambda^t$ to a pair $(V,T)$ where $V$ is a SSYRT of shape $\beta$ and $T$ is a Littlewood-Richardson skew SSYRT of shape $\beta/\alpha$ and content $\lambda$.  See figure Fig.~\ref{fig:LRrule} for an example of the following procedure.

First, define $S_\lambda$ to be the row-strict semi-standard Young tableau of shape $\lambda^t$ with all entries 1 in column 1, all entries 2 in column 2, etc.  Map the pair $(S, S_\lambda)$ to a double word by placing the entries read by column, from top to bottom, left to right, so that the entries from $S_\lambda$ form the top word and the entries of $S$ form the bottom word. Then to construct $V$ and $T$, start with $U$ and a tableau of shape $\alpha$ filled with *'s. Insert the labels from the bottom row of the double word into $U$ in order from left to right.  After each insertion, add a cell to the star tableau in the location where a cell was added to $U$.  Fill this new cell with the entry in the top row corresponding to the entry inserted into $U$.  The result of the insertion into $U$ is $V$ and the result of the insertion into the star tableau is $T$ as shown in Fig.~\ref{fig:LRrule}.  The equivalence of this bijection to the bijection in \cite{ferreira} follows immediately from Lemmas~\ref{lem:commute} and \ref{lem:LRbij}.   

\begin{figure}
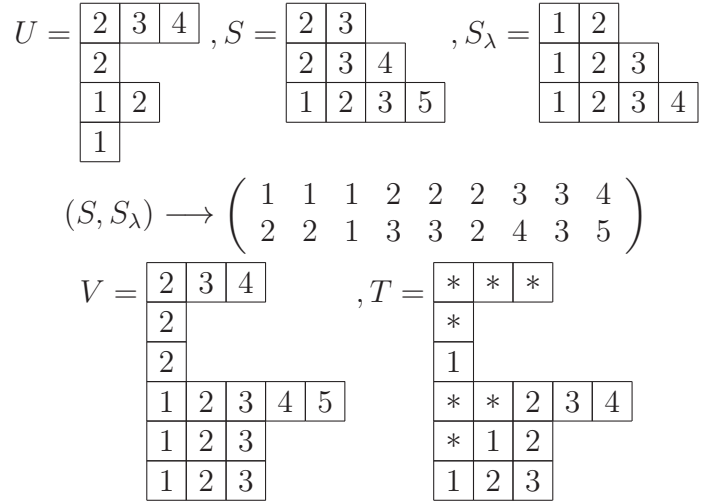

\[U = \tableau{2&3&4\\2\\1&2\\1}\,\,, S=\tableau{ 2&3\\2&3&4\\1&2&3&5}\,, S_\lambda = \tableau{1&2\\1&2&3\\1&2&3&4 }\]
\[ \left(S, S_\lambda\right) \longrightarrow \left(\begin{array}{ccccccccc} 1&1&1&2&2&2&3&3&4\\2&2&1&3&3&2&4&3&5\end{array}\right)\]
\[V=\tableau{2&3&4\\2\\2\\1&2&3&4&5\\1&2&3\\1&2&3}\,\, , T = \tableau{*&*&*\\*\\1\\*&*&2&3&4\\*&1&2\\1&2&3}\]
\caption{Using insertion to obtain L-R skew SSYRT.}\label{fig:LRrule}
\end{figure}

\subsection{A conjugation-like map for composition tableaux}
As noted in Section~\ref{sec:intro}, traditional notions of conjugation simply do not work for composition tableaux.  The result of reflecting a composition diagram over the main diagonal might not be a composition diagram, so a new operation is needed.  In this section we describe an analog to the function $\phi$ introduced in \cite{MasRem10}.  The function $\phi$ takes {\em semi-standard reverse composition tableaux} (SSRCT) to SSRRT.  Given a SSRCT $T$ of shape $\alpha$ with $\boldsymbol{\lambda}(\alpha) = \lambda$, $\phi(T)$ is a SSRRT of shape $\gamma$ with $\boldsymbol{\lambda}(\gamma) = \lambda'$.  That is, the composition $\gamma$ has the underlying partition $\lambda'$, the conjugate of $\lambda$.  To construct $\phi(T)$ column by column, first take the largest entry in each column of $T$ and arrange this set of entries in weakly increasing order from top to bottom to obtain the first column of $\phi(T)$. Then take the next largest entry in each column of $T$ and insert the entries into the second column of $\phi(T)$ by starting with the largest entry and placing it in the highest position such that the entry immediately to its left is strictly greater. Continue likewise.  This procedure is well-defined since the entries in a given column of $T$ are distinct and the rows of $T$ are weakly increasing.

This map can be quickly modified to a function taking {\em semi-standard Young composition tableaux} (SSYCT) as defined in \cite{LMvW13} to SSYRT.  We define $\tilde{\phi}:\SSYCT\rightarrow \SSYRT$ as follows: given a SSYCT $U$ of shape $\alpha$ with $\boldsymbol{\lambda}(\alpha) = \lambda$, $\tilde{\phi}(U)$ is a SSYRT of shape $\gamma$ with $\boldsymbol{\lambda}(\gamma) = \lambda'$.  We construct $\tilde{\phi}(U)$ column by column, by taking the {\em smallest} entry in each column of $U$ and placing these entries in weakly increasing order (from bottom to top) in the first column of $\tilde{\phi}(U)$. To construct the second column, take the second smallest entry of each column of $U$ and insert these entries into the second column of $\tilde{\phi}(U)$ starting with the smallest entry and placing it in the highest position such that the entry immediately to its left is strictly smaller. Continue likewise.  Again, this map is well-defined because of the row and column restrictions on $U$.  An example of $\phi$ and $\tilde{\phi}$ can be seen in Fig.~\ref{fig:phi}.

\begin{lemma}\label{lem:phicommute}
Let $T$ be an SSRCT of shape $\alpha$.  Then $(f\circ \phi)(T) = (\tilde{\phi}\circ f)(T)$.
\end{lemma}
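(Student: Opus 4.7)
The plan is to verify the identity column-by-column, showing that $f$ intertwines the constructions of $\phi$ and $\tilde{\phi}$. Throughout, I understand $f$ as the natural involution extended from $\SSRRT \leftrightarrow \SSYRT$ to $\mathrm{SSRCT} \leftrightarrow \SSYCT$: it does not geometrically flip the diagram, it merely relabels rows (swapping English and French conventions) and replaces each entry $i$ by $m-i+1$, where $m$ is the largest entry. The key consequence is that the $j$-th largest entry in any column of $T$ is sent to the $j$-th smallest entry of the corresponding column of $f(T)$, so the multiset that $\phi$ deposits into column $k$ of $\phi(T)$ is precisely the $f$-image of the multiset that $\tilde{\phi}$ deposits into column $k$ of $\tilde{\phi}(f(T))$.

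First I would dispatch the initial column: $\phi$ lists the column maxima of $T$ in weakly increasing order top-to-bottom, while $\tilde{\phi}$ lists the column minima of $f(T)$ in weakly increasing order bottom-to-top. Since maxima correspond to minima under $i \mapsto m-i+1$ and both arrangements describe the unique weakly-increasing-upward ordering of the underlying multiset, the first columns of $f(\phi(T))$ and $\tilde{\phi}(f(T))$ coincide. For the inductive step, assume columns $1, \ldots, k-1$ of $f(\phi(T))$ and $\tilde{\phi}(f(T))$ already agree. Let $e_1 > \cdots > e_r$ be the entries inserted by $\phi$ into column $k$ of $\phi(T)$ in that order, and write $e_j^* = m - e_j + 1$, so that $e_1^* < \cdots < e_r^*$ is the order in which $\tilde{\phi}$ inserts column $k$ of $\tilde{\phi}(f(T))$. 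Writing $b_i$ (respectively $b_i^*$) for the left-neighbour entry in row $i$ of $\phi(T)$ (respectively $\tilde{\phi}(f(T))$), the equivalence
\[ b_i > e_j \iff b_i^* < e_j^* \]
shows that the rows eligible for $\phi$'s strict-greater rule are exactly the rows eligible for $\tilde{\phi}$'s strict-smaller rule.

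Because $f$ leaves the diagram physically invariant, the phrase ``highest available row'' refers to the same physical cell in both procedures, so an induction on $j = 1, \ldots, r$ places $e_j$ (under $\phi$) and $e_j^*$ (under $\tilde{\phi}$) in the same physical cell. The cellwise equality $f(\phi(T))(i,k) = \tilde{\phi}(f(T))(i,k)$ then follows for every $(i,k)$, and the equality of the resulting composition shapes is automatic from the equality of occupied cells. The main obstacle I anticipate is the bookkeeping around the English--French convention swap: one must confirm that ``highest'' in $\phi$'s description and in $\tilde{\phi}$'s description designate the same visual row, and that the cumulative set of occupied rows remains synchronized across the induction. These are not deep issues, but they are easy to misstate; once pinned down, everything reduces to the order-reversing identity $b > e \iff m-b+1 < m-e+1$, which powers both the initial sorting step and the inductive insertion step uniformly.
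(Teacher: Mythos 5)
Your proof is correct and takes essentially the same route as the paper's, which likewise argues column by column that $f$ turns the $k$-th largest entries of the columns of $T$ into the $k$-th smallest entries of the columns of $f(T)$, and that $\phi$ and $\tilde{\phi}$ run the identical insertion process with the roles of increasing and decreasing exchanged. Your write-up simply makes explicit, via the equivalence $b > e \iff m-b+1 < m-e+1$ and the induction on insertion steps within a column, what the paper states more briefly.
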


\begin{proof}
Note that the largest entry in a column of $T$ will become the smallest entry in that column of $f(T)$.  Therefore the entries in the leftmost column of $(\tilde{\phi} \circ f)(T)$ are obtained by reversing the entries in the leftmost column of $\phi(T)$ in order since $\phi$ arranges these entries in weakly decreasing order and $\tilde{\phi}$ arranges these entries in weakly increasing order.  Similarly, the entries in the second column of $\phi(T)$ are the reversals of the entries in the second column of $(\tilde{\phi}\circ f)(T)$ and since the maps $\phi$ and $\tilde{\phi}$ involve the same process with the roles of increasing and decreasing reversed, the second column of $(\tilde{\phi}\circ f)(T)$ is obtained by reversing the entries in the second column of $\phi(T)$.  Similarly, each column of $(\tilde{\phi}\circ f)(T)$ is obtained by reversing the entries in the corresponding column of $\phi(T)$, which is precisely the procedure used in the function $f$, so $(f\circ \phi)(T) = (\tilde{\phi}\circ f)(T)$.
\end{proof}

\begin{figure}
\[ \begin{array}{ccc}
T=\tableau{4&2&1\\*&*&4&4\\*&*&3\\*&1}&\xrightarrow{\phi} & \phi(T) = \tableau{4&3&2&1\\4\\*&*&*&4\\*&*&1}\\\\
f\downarrow && f\downarrow\\\\

f(T)=U= \tableau{1&3&4\\*&*&1&1\\*&*&2\\*&4} & \xrightarrow{\tilde{\phi}} & \tilde{\phi}(U) = \tableau{1&2&3&4\\1\\*&*&*&1\\*&*&4}
\end{array}\]
\caption{The result of $\phi$ and $\tilde{\phi}$.}\label{fig:phi}

\end{figure}
This function $\tilde{\phi}$ establishes that 
\[ \sum_{\substack{\alpha\\\boldsymbol{\lambda}(\alpha) = \lambda}}S_\alpha = \sum_{\substack{\beta\\\boldsymbol{\lambda}(\beta) = \lambda'}} \mathcal{R}_\beta,\] where $S_\alpha$ is the Young quasisymmetric Schur function defined in Appendix~\ref{appendix}.
\section{Connections and Future Directions}

The {\it omega operator} $\omega$ sends the Schur function $s_{\lambda}$ to the Schur function $s_{\lambda'}$, where $\lambda'$ is the transpose of the partition $\lambda$.  This operator can be extended to quasisymmetric functions via the antipode map~\cite{MalReu95}.  The Hopf algebra $QSym$ has antipode $S$ defined by $$S(F_{\alpha}) = (-1)^{| \alpha |} F_{w(\alpha)},$$ where $w(\alpha)$ is the composition corresponding to $\tilde{\alpha}$, the set complement of the set corresponding to $\alpha$~\cite{MalReu95}.  The omega operator on quasisymmetric functions is defined by $\omega(F_{\alpha}) = (-1)^{|\alpha|} S(F_{\alpha}),$ meaning $\omega(F_{\alpha}) = F_{w(\alpha)}$.

This operator, applied to the Young row-strict quasisymmetric Schur functions, mimics the conjugation appearing in the symmetric function case.  That is, since $\omega(s_{\lambda}) = s_{\lambda'}$, and $s_{\lambda} = \tilde{s}_{\lambda}$ where $\tilde{s}_{\lambda}$ is the function generated by row-strict Young diagrams of shape $\lambda$, it is natural to expect that the quasisymmetric refinement of the omega operator would send a Young quasisymmetric Schur function to a Young row-strict quasisymmetric Schur function, which is indeed the case.

\begin{theorem}
The Young quasisymmetric Schur functions are conjugate to the Young row-strict quasisymmetric Schur functions in the following sense: $$\omega(S_{\alpha}(x_1, \hdots , x_n)) =\mathcal{R}_{\alpha}(x_n, \hdots , x_1).$$
\end{theorem}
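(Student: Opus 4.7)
The strategy is to expand both sides in Gessel's fundamental basis and then match the resulting expansions. From~\cite{LMvW13} we have $S_\alpha = \sum_{T \in \SYCT(\alpha)} F_{\comp(\Des(T))}$; applying $\omega$ termwise via $\omega(F_\beta) = F_{w(\beta)}$ yields
\[\omega(S_\alpha) = \sum_{T \in \SYCT(\alpha)} F_{w(\comp(\Des(T)))}.\]
A direct check on the monomial basis shows $M_\beta(x_n,\ldots,x_1) = M_{rev(\beta)}(x_1,\ldots,x_n)$, and since refinement of compositions commutes with reversal, $F_\beta(x_n,\ldots,x_1) = F_{rev(\beta)}(x_1,\ldots,x_n)$ for $n \geq |\beta|$. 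Combined with Proposition~\ref{fundDecomp}, this gives
\[\mathcal{R}_\alpha(x_n,\ldots,x_1) = \sum_{T' \in \SYRT(\alpha)} F_{rev(\comp(\hat{D}(T')))}.\]

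The theorem then reduces to the multiset identity $\{w(\comp(\Des(T))) : T \in \SYCT(\alpha)\} = \{rev(\comp(\hat{D}(T'))) : T' \in \SYRT(\alpha)\}$ of compositions. Passing to the underlying subsets of $[n-1]$ (where $n = |\alpha|$), this amounts to showing that the multisets $\{[n-1]\setminus\Des(T) : T \in \SYCT(\alpha)\}$ and $\{\{n-j : j \in \hat{D}(T')\} : T' \in \SYRT(\alpha)\}$ coincide. I would prove this by constructing a bijection $\psi: \SYCT(\alpha) \to \SYRT(\alpha)$ with the pointwise property $[n-1]\setminus\Des(T) = \{n-j : j \in \hat{D}(\psi(T))\}$, exploiting the common parameterization of both sets by saturated chains in the Young composition poset $\YL_c$ ending at $\alpha$. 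Each chain records a sequence of coverings, each either ``append a part of size one'' or ``extend the rightmost part of a given size''; labeling the cell added at step $i$ with the integer $i$ builds the SYCT, while an auxiliary transformation of the chain data (most naturally a label-reversal $i \mapsto n-i+1$ combined with a reordering of equally-long rows) builds the matching SYRT.

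The main obstacle will be verifying the descent-matching identity under $\psi$, which demands a careful case analysis on consecutive covering steps and on the relative positions of their added cells. Specifically, one must track how each covering type contributes to $\Des$ on the SYCT side and to $\hat{D}$ on the SYRT side, and then check that the chain transformation defining $\psi$ interchanges these contributions precisely under the index map $i \mapsto n-i$. Once the pointwise descent correspondence is in place, the multiset identity follows immediately, and combining it with the fundamental expansions above proves the theorem.
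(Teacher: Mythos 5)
Your proposal follows the same route as the paper's proof: expand $S_{\alpha}$ and $\mathcal{R}_{\alpha}$ in the fundamental basis, apply $\omega(F_{\beta})=F_{w(\beta)}$ termwise, use $F_{\beta}(x_n,\ldots,x_1)=F_{rev(\beta)}(x_1,\ldots,x_n)$, and reduce everything to matching the two multisets of descent compositions, which is exactly the reduction the paper makes. The one step you leave open --- the pointwise correspondence $[n-1]\setminus\Des(T)=\{\,n-j : j\in\hat{D}(\psi(T))\,\}$ --- is precisely where the paper concentrates its (brief) argument, and the paper realizes $\psi$ directly by reversing the entries $i\mapsto n-i+1$ of the standard filling rather than by manipulating saturated chains, which turns the ``main obstacle'' you anticipate into a short check of where $i$ and $i+1$ sit relative to one another.
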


\begin{proof}

Expand $S_{\alpha}$ in terms of the fundamental quasisymmetric functions and apply the omega operator to obtain:

\begin{eqnarray*}
\omega(S_{\alpha}(x_1, \hdots , x_n))  & = & \sum_{\beta} d_{\alpha \beta} \omega(F_{\beta}(x_1, \hdots , x_n)) \\
 & = & \sum_{\beta} d_{\alpha \beta} F_{w({\beta})}(x_1, \hdots , x_n) \\
 & = & \sum_{\beta} d_{\alpha \beta} F_{rev(w({\beta}))}(x_n, \hdots , x_1) \\
 & = & \mathcal{R}_{\alpha}(x_n, \hdots , x_1).
\end{eqnarray*}

Here, the first equation is derived from the expansion of the Young quasisymmetric Schur functions in terms of the fundamental quasisymmetric Schur functions~\cite{LMvW13}.  In particular, $d_{\alpha \beta}$ is the number of SYCT of shape $\alpha$ whose descent set (the values $i$ such that $i+1$ appears weakly to the left of $i$) is $\beta$.  The second equation is obtained by applying the quasisymmetric extension of the omega operator to the fundamental quasisymmetric functions appearing in the expansion of the Young quasisymmetric Schur functions.  Note that $w(\beta)$ selects all values $i$ such that $i$ is not in the descent set of the corresponding SYCT.  This is precisely the values $i$ such that $i+1$ appears strictly to the right of $i$.  The third equality is direct from the definition of the fundamental quasisymmetric functions, and the fourth equality is a direct consequence of the expansion of the Young row-strict quasisymmetric Schur functions in terms of the fundamental quasisymmetric functions, since $i+1$ appearing strictly to the right of $i$ is equivalent to $n-i+1$ appearing strictly to the right of $n-(i+1)+1=n-i$ when the variables of the SYCT are reversed.  This means $n-i$ is in the descent set of the corresponding SYRT as desired.
\end{proof}

As discussed in Section~\ref{sec:rsqf}, the Young row-strict quasisymmetric Schur functions can be obtained from the row-strict quasisymmetric Schur functions in the same way that the Young quasisymmetric Schur functions are obtained from the quasisymmetric Schur functions.  We use the Young versions rather than the version originally introduced in~\cite{MasRem10} since the generating diagrams for the Young version map more naturally to semi-standard Young tableaux, whereas the original generating diagrams map to reverse semi-standard Young tableaux.  The quasisymmetric Schur functions can be derived by summing Demazure atoms~\cite{HLMvW09} over all weak compositions which collapse to the same composition.  The Demazure atoms are specializations of nonsymmetric Macdonald polynomials to $q=t=0$~\cite{HHL06, Mas08}.  Although the Young row-strict quasisymmetric functions are in fact a completely different basis for quasisymmetric functions than the row-strict quasisymmetric functions, they behave similarly and exhibit all of the same combinatorial properties.  It is natural to expect that the Young quasisymmetric Schur functions can also be obtained as sums of specializations of a certain Macdonald polynomial variant, although the precise details of this derivation have yet to be worked out.

It would be helpful to understand the Young row-strict quasisymmetric Schur functions as characters of some representation.  In ~\cite{TvW} Tewari and van Willigenburg develop a representation theoretical interpretation of the quasisymmetric Schur functions $\mathcal{QS}_\alpha$ where \[\mathcal{QS}_\alpha = \sum_{T \in SSRCT(\alpha)} x^T\] is a sum over semistandard {\em reverse} column-strict composition tableaux (SSRCT).  They show that there is a 0-Hecke module $\boldsymbol{S}_\alpha$ whose quasisymmetric characteristic is $\mathcal{QS}_\alpha$.  One future project is to identify the appropriate 0-Hecke module with quasisymmetric characteristic $\mathcal{R}_\alpha$.

\newpage
\appendix
\section{The four basic types of composition tableaux}\label{appendix}

\begin{tabular}{|p{1.5in}|p{1.5in}|p{3in}|}
\hline
{\bf Object Name} & {\bf Function Generated}&{\bf Basic Properties}\\\hline\hline
Semi-standard reverse~row-strict composition tableau (SSRRT)& $\mathcal{RS}_\alpha $ \newline
Row-strict quasisymmetric Schur function & \begin{enumerate}
\item Diagram in English convention.
\item Row entries strictly decrease left to right.
\item The leftmost column weakly increases from top to bottom.
\item For cells arranged as in Fig.~\ref{fig:triple}, if $c< a$, then $c\leq b$.
\end{enumerate} \\\hline
Semi-standard Young row-strict composition tableau (SSYRT) & $\mathcal{R}_\alpha$ \newline Row-strict Young quasisymmetric Schur function& \begin{enumerate}
\item Diagram in French convention.
\item Row entries strictly increase left to right.
\item The leftmost column weakly decreases from top to bottom.
\item For cells arranged as in Fig.~\ref{fig:triple}, if $a<c$, then $b\leq c$.
\end{enumerate}  \\\hline
Semi-standard reverse composition tableau (SSRCT) & $\mathcal{QS}_\alpha$ \newline Quasisym-\newline metric Schur function& \begin{enumerate}
\item Diagram in English convention.
\item Row entries weakly decrease left to right.
\item The leftmost column strictly increases from top to bottom.
\item For cells arranged as in Fig.~\ref{fig:triple}, if $c\leq a$, then $c< b$.
\end{enumerate}\\\hline
Semi-standard Young composition tableau (SSYCT) & $\mathcal{S}_\alpha $\newline Young quasisymmetric Schur function&\begin{enumerate}
\item Diagram in French convention.
\item Row entries weakly increase left to right.
\item The leftmost column strictly decreases from top to bottom.
\item For cells arranged as in Fig.~\ref{fig:triple}, if $a\leq c$, then $b< c$.
\end{enumerate}  \\\hline
\end{tabular}

\begin{figure}[h]
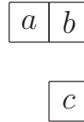

\[\tableau{ a&b\\\\ & c}\]
\caption{Triple configuration for composition tableaux.}\label{fig:triple}
\end{figure}

\begin{acknowledgements*}
Both authors were partially supported by a Wake Forest University Collaboration Pilot Grant.
\end{acknowledgements*}

\bibliographystyle{spmpsci}      

\begin{thebibliography}{10}
\providecommand{\url}[1]{{#1}}
\providecommand{\urlprefix}{URL }
\expandafter\ifx\csname urlstyle\endcsname\relax
  \providecommand{\doi}[1]{DOI~\discretionary{}{}{}#1}\else
  \providecommand{\doi}{DOI~\discretionary{}{}{}\begingroup
  \urlstyle{rm}\Url}\fi

\bibitem{Abe80}
Abe, E.: Hopf algebras, \emph{Cambridge Tracts in Mathematics}, vol.~74.
\newblock Cambridge University Press, Cambridge-New York (1980).
\newblock Translated from the Japanese by Hisae Kinoshita and Hiroko Tanaka

\bibitem{ABS06}
Aguiar, M., Bergeron, N., Sottile, F.: Combinatorial {H}opf algebras and
  generalized {D}ehn-{S}ommerville relations.
\newblock Compos. Math. \textbf{142}(1), 1--30 (2006).
\newblock \doi{10.1112/S0010437X0500165X}.
\newblock \urlprefix\url{http://dx.doi.org/10.1112/S0010437X0500165X}

\bibitem{Ehr96}
Ehrenborg, R.: On posets and {H}opf algebras.
\newblock Advances in Mathematics \textbf{119}(1), 1--25 (1996)

\bibitem{ferreira}
Ferreira, J.: A {L}ittlewood-{R}ichardson type rule for row-strict
  quasisymmetric {S}chur functions.
\newblock Disc. Math. Theor. Comp. Sci. Proc. \textbf{AO}, 329--338 (2011)

\bibitem{Ful97}
Fulton, W.: Young tableaux, \emph{London Mathematical Society Student Texts},
  vol.~35.
\newblock Cambridge University Press, Cambridge (1997).
\newblock With applications to representation theory and geometry

\bibitem{Ges84}
Gessel, I.: Multipartite p-partitions and inner products of skew {S}chur
  functions.
\newblock Contemp. Math \textbf{34}, 289--301 (1984)

\bibitem{GesReu93}
Gessel, I.M., Reutenauer, C.: Counting permutations with given cycle structure
  and descent set.
\newblock Journal of Combinatorial Theory, Series A \textbf{64}(2), 189--215
  (1993).
\newblock \doi{http://dx.doi.org/10.1016/0097-3165(93)90095-P}.
\newblock
  \urlprefix\url{http://www.sciencedirect.com/science/article/pii/009731659390095P}

\bibitem{HHL06}
Haglund, J., Haiman, M., Loehr, N.: A combinatorial formula for non-symmetric
  {M}acdonald polynomials.
  \newblock Amer. J. Math. \textbf{130}(2), 359--383  (2008).

\bibitem{HLMvW09}
Haglund, J., Luoto, K., Mason, S., van Willigenburg, S.: Quasisymmetric {S}chur
  functions.
\newblock J. Combin. Theory Ser. A \textbf{118}(2), 463--490 (2011).
\newblock \doi{10.1016/j.jcta.2009.11.002}.
\newblock \urlprefix\url{http://dx.doi.org/10.1016/j.jcta.2009.11.002}

\bibitem{HGK10}
Hazewinkel, M., Gubareni, N., Kirichenko, V.V.: Algebras, rings and modules,
  \emph{Mathematical Surveys and Monographs}, vol. 168.
\newblock American Mathematical Society, Providence, RI (2010).
\newblock \doi{10.1090/surv/168}.
\newblock \urlprefix\url{http://dx.doi.org/10.1090/surv/168}.
\newblock Lie algebras and Hopf algebras

\bibitem{Hiv00}
Hivert, F.: Hecke algebras, difference operators, and quasi-symmetric
  functions.
\newblock Advances in Mathematics \textbf{155}(2), 181--238 (2000).
\newblock \doi{http://dx.doi.org/10.1006/aima.1999.1901}.
\newblock
  \urlprefix\url{http://www.sciencedirect.com/science/article/pii/S0001870899919011}

\bibitem{LLS09}
Lam, T., Lauve, A., Sottile, F.: Skew {L}ittlewood-{R}ichardson rules from
  {H}opf algebras.
\newblock Arxiv preprint arXiv:0908.3714  (2009)

\bibitem{LMvW13}
Luoto, K., Mykytiuk, S., van Willigenburg, S.: An introduction to
  quasisymmetric {S}chur functions.
\newblock Springer Briefs in Mathematics. Springer, New York (2013).
\newblock \doi{10.1007/978-1-4614-7300-8}.
\newblock \urlprefix\url{http://dx.doi.org/10.1007/978-1-4614-7300-8}.
\newblock Hopf algebras, quasisymmetric functions, and Young composition
  tableaux

\bibitem{MalReu95}
Malvenuto, C., Reutenauer, C.: Duality between quasi-symmetric functions and
  the {S}olomon descent algebra.
\newblock J. Algebra \textbf{177}(3), 967--982 (1995).
\newblock \doi{10.1006/jabr.1995.1336}.
\newblock \urlprefix\url{http://dx.doi.org/10.1006/jabr.1995.1336}

\bibitem{Mas08}
Mason, S.: A decomposition of {S}chur functions and an analogue of the
  {R}obinson-{S}chensted-{K}nuth algorithm.
\newblock S\'eminaire Lotharingien de Combinatoire \textbf{57}(B57e) (2008).
\newblock
  \urlprefix\url{http://www.citebase.org/abstract?id=oai:arXiv.org:math/0604430}

\bibitem{MasRem10}
Mason, S., Remmel, J.: Row-strict quasisymmetric schur functions.
\newblock Annals of Combinatorics \textbf{18}(1), 127--148 (2014)

\bibitem{Sch01}
Schur, I.: \"{U}ber eine klasse von matrizen, die sich einer gegebenen matrix
  zuordnen lassen.
\newblock Ph.D. thesis, Berlin (1901)

\bibitem{Sta84}
Stanley, R.P.: On the number of reduced decompositions of elements of coxeter
  groups.
\newblock European Journal of Combinatorics \textbf{5}(4), 359--372 (1984).
\newblock \doi{http://dx.doi.org/10.1016/S0195-6698(84)80039-6}.
\newblock
  \urlprefix\url{http://www.sciencedirect.com/science/article/pii/S0195669884800396}

\bibitem{Sta99}
Stanley, R.P.: Enumerative combinatorics. {V}ol. 2, \emph{Cambridge Studies in
  Advanced Mathematics}, vol.~62.
\newblock Cambridge University Press, Cambridge (1999).
\newblock With a foreword by Gian-Carlo Rota and appendix 1 by Sergey Fomin

\bibitem{Swe69}
Sweedler, M.E.: Hopf algebras.
\newblock Mathematics Lecture Note Series. W. A. Benjamin, Inc., New York
  (1969)

\bibitem{TvW}
Tewari, V., Willigenburg, S.: Quasisymmetric schur functions and modules of the
  0-hecke algebra.
\newblock DMTCS Proceedings \textbf{0}(01) (2014).
\newblock
  \urlprefix\url{http://www.dmtcs.org/dmtcs-ojs/index.php/proceedings/article/view/dmAT0111}

\bibitem{Ugl00}
Uglov, D.: Skew {S}chur functions and {Y}angian actions on irreducible
  integrable modules of {$\widehat{\mathfrak{g l}}_N$}.
\newblock Ann. Comb. \textbf{4}(3-4), 383--400 (2000).
\newblock \doi{10.1007/PL00001287}.
\newblock \urlprefix\url{http://dx.doi.org/10.1007/PL00001287}.
\newblock Conference on Combinatorics and Physics (Los Alamos, NM, 1998)

\end{thebibliography}

\end{document}